\newtheorem{theorem}{Theorem}[section]
\newtheorem{corollary}[theorem]{Corollary}
\newtheorem{lemma}[theorem]{Lemma}
\newtheorem{proposition}[theorem]{Proposition}
\theoremstyle{definition}
\newtheorem{definition}[theorem]{Definition}
\theoremstyle{remark}
\newtheorem{remark}[theorem]{\sc Remark}
\newtheorem{example}[theorem]{\sc Example}
\newtheorem{question}[theorem]{\sc Question}
\newcommand{\Sing}{{\rm{Sing\hspace{2pt}}}}
\newcommand{\rank}{{\rm{rank\hspace{2pt}}}}
\newcommand{\Disc}{{\rm{Disc\hspace{2pt}}}}
\renewcommand{\d}{{\rm{d}}}
\renewcommand{\t}{{\rm{T}}}
\newcommand{\m}{\setminus}
\newcommand{\bR}{{\mathbb R}}
\newcommand{\bN}{{\mathbb N}}
\newcommand{\R}{\mathbb{R}}
\begin{document}
	
	\title{Tameness conditions and the Milnor fibrations for composite singularities}
	
	\author{\sc R. N. Ara\'ujo dos Santos}
	\address{\sc Raimundo N. Ara\'ujo dos Santos: ICMC, Universidade de S\~ao Paulo, Av. Trabalhador S\~ao-Carlense, 400 - CP Box 668, 13560-970 S\~ao Carlos, S\~ao Paulo, Brazil.}
	\email{rnonato@icmc.usp.br}
	
	\author{\sc D. Dreibelbis}
	\address{\sc Daniel Dreibelbis: University of North Florida, 1 UNF Dr., Jacksonville, FL, 32258, USA}
	\email{ddreibel@unf.edu}
	
	\author{\sc M. F. Ribeiro}
	\address{\sc M. F. Ribeiro: UFES, 	Universidade Federal do Esp\'irito Santo,  Av. Fernando Ferrari, 514 - CEP 29.075-910 Vit\'oria, Espirito Santo,  Brazil}
	\email{maico.ribeiro@ufes.br}
	\email{mfsr83@gmail.com}

	\author{\sc I. D. Santamar\'ia Guar\'in}
	\address{\sc I. D. Santamar\'ia Guar\'in: ICMC, Universidade de S\~ao Paulo, Av. Trabalhador S\~ao-Carlense, 400 - CP Box 668, 13560-970 S\~ao Carlos, S\~ao Paulo, Brazil.}
	\email{ivansantamariag89@usp.br}

	\keywords{Milnor fibrations, Topology of real singularities, compositions of singularities, Milnor sets, polar sets, Topological invariant of singularities}
	
	\subjclass{58K15, 14D06, 58K35, 14B05, 32S55, 32S05}
	
	\begin{abstract} {
			
			In this paper, we introduce a new regularity condition that characterizes the tameness of a composite singularity $H=G\circ F$ in a sharp way. Our approach provides a natural tool that links the topology of the Milnor tube fibrations through the Milnor fibers of the respective components of the map germs $F$, $G$ and $H = G\circ F$. We also study the invariance of tameness by $\mathcal{L}$-equivalence, $\mathcal{R}$-equivalence, and hence by $\mathcal{A}$-equivalence, and we give conditions for when two component map germs of the composite singularity $H=G\circ F$ being tame implies the third one is tame. As an application, we show how to relate the Euler characteristics of the Milnor fibers of $F,G$ and $H$ to each other.}
		
	\end{abstract}
	
	\maketitle
	
	\section{Introduction}
	
	Let $F:(\bR^M,0)\to (\bR^N,0), M\geq N\geq 2,$ be an analytic map germ with isolated critical value, that is, $F(\Sing F):=\Disc F$ is just the origin and satisfies a tame condition, or equivalently the Milnor condition $(b)$ at the origin in the sense of D. Massey in \cite{DA}. It is well known that a Milnor tube fibration exists and the Milnor fibers are a local smoothing of the special fiber $V_{F}:=F^{-1}(0)$, which is unique up to diffeomorphism.  If we have another analytic map germ $G:(\bR^N,0)\to (\bR^K,0), N\geq K\geq 2$, 
	, we can consider the germ of the composition map $H=G\circ F$, and then ask the following question:
	
	\begin{question} \label{q1}
		Given $F$, $G$, and $H = G \circ F$, under what conditions on $F$ and $G$, one can ensure that $\Disc H = \{0\}$ and that $H$ satisfies a tameness condition?
	\end{question}
	
	In \cite{CT23}, the authors showed that under the restrictive condition of isolated singularity for $G$, i.e., $\Sing G= \{0\}$, the tameness for the composition $H$ is guaranteed, provided that $\Disc F = \{0\}$ and $ F $ satisfies the tame condition. Furthermore, it was proved in \cite{RADG} that any harmonic morphism is tame and has an isolated critical value at the origin. See also \cite{AR} for a construction of harmonic complex functions.  Furthermore, the composition of harmonic morphisms is a harmonic morphism. So, for such a special class of functions, Question \eqref{q1} is true. 
	
	\vspace{0.3cm}
	
	In this paper we develop a study without imposing any condition under the dimension of the singular set of $G$, as long as it is still included in $V_G$, showing how to relate the singular sets and the Milnor sets of $F,G$ and $H$ to introduce a condition (Condition \eqref{eqHtame}) that characterizes the tameness of the composition map germ $H$ and shows that it may not depend on the tameness of $G$. Our condition is sharp and is satisfied by all previously known cases. In particular, it provides an answer to the aforementioned question. Moreover, we also introduce a sufficient condition under which the tameness of $F$ and $G$ assures the tameness of the composition $H$ as well, generalizing the main result of the paper \cite{CT23}. 
	
	\vspace{0.3cm}
	
	The paper is organized as follows. In Section 2, we recall the main results about the existence of a local tube fibration. In Section 3, we relate the singular sets and the discriminants of $ F $, $ G $, and $ H $. We find an important relationship between the Milnor sets of $ F $ and $ H $ and show that the Milnor set is invariant by ${\mathcal L}$-equivalence, and it is invariant by $\mathcal{R}$ and $\mathcal{A}$-equivalence if we consider in the source space the pullback of the metric $\rho$ of the target space.  In Section 4, we study the problem of tameness of the composite. We show that, in general, the tameness of $F$ and $G$ are not enough to guarantee the tameness of H, under the less restrictive hypothesis $\dim \Sing G >0$. However, we introduce a sufficient condition for the tameness of $H$, under the hypothesis that $F$ and $G$ are both tame. In the final section, we show that our results have several interesting applications; among them, they provide a powerful tool for relating the geometric and topological data of the Milnor fiber (or smoothing) of $H$ to those of the Milnor fibers of $F$ and $G$.  For example, we show how the Euler characteristics of the Milnor fibers of $F,G$ and $H$ and their Milnor tubes are related to each other.    
	
	\section{Existence of the Milnor's tube fibrations}

	Here we introduce the Milnor tube fibration for a real analytic map germ. The existence of these fibrations has been extensively studied by mathematicians since the 1980s. For detailed information, refer to the following references \cite{Mi,Le,Ma,DA,Ri, ADER} and \cite{ART}, for further
	generalities.

	\vspace{0.2cm}
	
	Let $F:(\bR^M,0) \to (\bR^N,0),$ $M\geq N\geq 2,$ be a non-constant real analytic map germ and let $F: U \to \bR^{N},$ $F(0)=0$ be a representative of the germ, where $U \subseteq \mathbb{R}^{M}$ is an open set and $F(x)=(F_1(x),F_2(x), \ldots, F_N(x)).$ From now on we will denote a germ and its representative by the same notation.
	
	\vspace{0.3cm}
	
	Denote the zero locus $ F^{-1}(0)$ of $F$ by $ V_F$ and the singular set by $\Sing F:=\{x\in U: \rank(d F(x))~\text{is not maximal}\}$, where $\d F(x)$ denotes the Jacobian matrix of $F$ at $x$. The discriminant set of $F$ is given by $\Disc(F):=F(\Sing(F)).$ We say that $F$ has an isolated critical value at origin if $\Disc(F)=\{0\}$, that is, $ \Sing F \subseteq V_F $. 
	
	\vspace{0.2cm}
	
	In this paper, all analytic map germs will be considered non-constant functions. 
	Moreover, as already mentioned above, without loss of generality, all sets and maps will be considered as a germ at the origin, once they are well defined. 
	
	\vspace{0.2cm}
	
	\begin{definition} We say that an analytic map germ $F:(\bR^M,0) \to (\bR^N,0)$ admits a Milnor's tube fibration if there exist $\epsilon_{0}>0$ and $\eta>0$ such that for all $0< \epsilon \leq \epsilon_{0}$, in the closed ball $\bar{B}^M_{\epsilon}$ the restriction map 
		
		\begin{equation}\label{tmil}
		F_{|}:\bar{B}^M_{\epsilon}\cap F^{-1}(\bar{B}_{\eta}^{N}-\{0\})\to \bar{B}_{\eta}^{N}-\{0\}.
		\end{equation}
		
		\noindent is a locally trivial smooth fibration for all $0<\eta \ll \epsilon.$ Moreover, the diffeomorphism type does not depend on the choice of $\epsilon$ and $\eta$.
		
	\end{definition}
	
	
	
	To state the existence of the tube fibration we need to introduce some standard notation and definitions. Let $U \subset \mathbb{R}^M$ be an open set, $0\in U$, and let $\rho:U \to \mathbb{R}_{\ge 0}$ be a smooth nonnegative and proper function that is zero only at the origin, for example, the square of the Euclidean distance to the origin. 
	\begin{definition}\label{d:ms}
		Given $F:(\mathbb{R}^M, 0) \to (\mathbb{R}^N,0), M\geq N\geq 1,$ an analytic map germ. The set-germ at the origin
		\[M_\rho(F):=\Sing(F, \rho) \]
		is called the \emph{Milnor set of $F$}, or the set of \textit{$\rho$-nonregular points} of $F$, or even the \textit{ polar set of $F$ relative to $\rho$}.
	\end{definition}
	
	It follows from the definition that $\Sing F \subset M(F)$. Moreover, the complement set $M(F) \m \Sing F$ is the set of points where the non-singular fibers of $F$ and $\rho$ do not intersect in general position, i.e., the points of nontransversality between the fibers of $F$ and $\rho .$ 
	
	\vspace{0.2cm}
	
	For most of this paper, we consider only the Euclidean distance function $\rho_{E}(x)=\|x\|^{2}$, which we denote for short $M(F):= M_{\rho_{E}}(F)$. Under mild adaptations, all the results stated in this paper easily extend to any $\rho$ function as defined above.
	
	\vspace{0.2cm}
	
	\begin{definition}
		We say that a real analytic map germ $F:(\bR^M,0) \to (\bR^N,0)$ is \textit{tame}\footnote{This condition was first used in \cite{Ma} under the name \textit{Milnor condition (b) at the origin}, and later in \cite{ACT} to ensure the existence of a Milnor fibration.} if 
		\begin{equation}\label{eq:main}
		\overline{M(F)\m V_{F}}\cap V_{F} \subseteq \{0\}.
		\end{equation}
		as a set germ.
	\end{definition}
	
	\vspace{.2cm}

	We note that our definition of tameness depends on the choice of $\rho$.  In particular, it is possible for a function to be tame with respect to one choice of $\rho$, but not tame with respect to a different choice (see Remark~\ref{rem:nottame}).  Thus, whenever we claim in this paper that a function is not tame, we are saying that it is not tame with respect to $\rho_{E}(x)=\|x\|^{2}$.
	
	\vspace{0.2cm}
	
	It is well known that the $a_{f}$-Thom regularity along the singular fiber $V_{F}$ implies Condition \eqref{eq:main}, see \cite[Proposition 4.2]{ART}. The converse is not true, and in \cite{PT} and more recently in \cite{R},  several classes of map germs were introduced that satisfy Condition \eqref{eq:main} but are not Thom regular. Another interesting class of map germs satisfying Condition \eqref{eq:main} is the ICIS class of map germs. Let us recall that the ICIS condition for a map $F$ amounts to the condition $\Sing F \cap V_F \subseteq \{0\}$.   Therefore, Condition \eqref{eq:main} is not restricted to map germs with isolated critical value at the origin.   
	
	\vspace{0.2cm}
	
	The following result aids in our understanding of the local behavior of an analytic function defined on an analytic manifold, following the work in \cite{DA}. It is a very classical statement and the proof can be done as an easy application of the Curve Selection Lemma. For more details, see, for instance, \cite{Mi, Loo}. 
	
	\vspace{0.2cm}
	
	Let $Y \subset \bR^M$ be an analytic set of dimension $d$ that contains the origin $0$ and let $ g: \bR^M \to  \bR$ be an analytic function such that $g(0)=0$. Also, let $\Sigma_{Y}$ denote the singular set of $Y$. 
	
	\begin{lemma}\cite[Lemma 3.1]{DA}\label{reg} The critical points of $g_{|Y\m \Sigma_{Y}}$ lie in $\{g=0\}$ in a neighbourhood of the origin.
	\end{lemma}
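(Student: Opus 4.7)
The plan is to argue by contradiction using the classical Curve Selection Lemma, exactly as the excerpt suggests. Suppose the conclusion fails. Then in every neighborhood of $0$ there exists a point $p \in Y\setminus \Sigma_Y$ that is a critical point of $g_{|Y\setminus \Sigma_Y}$ with $g(p)\neq 0$. Consider the set
\[
C \;:=\; \bigl\{\, p \in Y\setminus \Sigma_Y \;:\; p\text{ is critical for } g_{|Y\setminus \Sigma_Y},\ g(p)\neq 0\,\bigr\}.
\]
The key preliminary step is to observe that $C$ is a semi-analytic set. Indeed, away from $\Sigma_Y$ the manifold $Y$ is locally the common zero set of a collection of analytic functions whose Jacobian has maximal rank; the criticality of $g$ on $Y$ is then expressed by the vanishing of the appropriate maximal minors of the enlarged Jacobian (containing $dg$), while $\Sigma_Y$ is defined by the vanishing of certain analytic determinants and $g\neq 0$ is an open analytic condition. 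All these conditions are semi-analytic, so $C$ is semi-analytic, and by hypothesis $0 \in \overline{C}$.

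By the Curve Selection Lemma one then obtains a real analytic curve $\gamma:[0,\varepsilon)\to \bR^M$ with $\gamma(0)=0$ and $\gamma(t)\in C$ for all $t\in(0,\varepsilon)$. The decisive computation is the derivative of $g\circ\gamma$. For $t>0$ the point $\gamma(t)$ lies in the smooth locus $Y\setminus\Sigma_Y$, so $\gamma'(t) \in T_{\gamma(t)}Y$. Since $\gamma(t)$ is a critical point of $g_{|Y\setminus\Sigma_Y}$, the differential $dg_{\gamma(t)}$ vanishes on $T_{\gamma(t)}Y$, and in particular
\[
(g\circ\gamma)'(t) \;=\; dg_{\gamma(t)}\bigl(\gamma'(t)\bigr) \;=\; 0 \quad \text{for all } t\in(0,\varepsilon).
\]
Therefore $g\circ\gamma$ is constant on $(0,\varepsilon)$, and by continuity this constant equals $g(\gamma(0))=g(0)=0$. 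This contradicts $g(\gamma(t))\neq 0$ for $t>0$ and finishes the proof.

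The only subtle point I expect is the semi-analyticity of $C$: one must be a bit careful to describe the critical locus of $g$ on $Y\setminus \Sigma_Y$ by analytic equations locally (using the local defining equations of $Y$ near a smooth point) and then patch these local descriptions together; everything else is a direct application of the Curve Selection Lemma and the chain rule. Both steps are standard and appear in Milnor \cite{Mi} and Looijenga \cite{Loo}, as the excerpt indicates.
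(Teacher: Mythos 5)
Your proposal is correct and follows exactly the route the paper intends: the paper's own ``proof'' consists of the single sentence that the lemma is a direct application of the Curve Selection Lemma (deferring details to \cite{Mi,Loo}), and your argument -- semi-analyticity of the critical locus with $g\neq 0$, a curve through the origin via the Curve Selection Lemma, and $(g\circ\gamma)'(t)=dg_{\gamma(t)}(\gamma'(t))=0$ forcing $g\circ\gamma\equiv g(0)=0$ -- is precisely that standard application spelled out. No gaps.
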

	\begin{proof}
		It is a direct application of the Curve Selection Lemma, which we may safely leave to the reader.
	\end{proof}

	As an immediate consequence of Lemma~\ref{reg}, we will reinterpret Condition~\eqref{eq:main} in a more compact way. See also \cite{Loo,Mi,DA} and \cite{CT23}. Indeed, decompose $V_F = \Sing F \sqcup \left(V_F \m \Sing F\right)$ into a disjoint union of semi-analytic varieties\footnote{Or, semi-algebraic if $F$ is polynomial.}. If $V_F \m \Sing F$ is not empty, it is a semi-analytic manifold. One may take $g(x)=\rho(x)=\|x\|^2$ and $Y=V_{F}$ and, after applying Lemma \ref{reg}, one can see that the analytic manifold $V_F \m \Sing F$ must be transversal to all level sets $g=\epsilon^{2}$ (spheres of radii $\epsilon$), for all $\epsilon >0$ small enough. Hence, in a small neighborhood of the origin one has $M(F)\cap (V_{F}\m \Sing F)=\emptyset$ . Therefore,
	
	$$\overline{M(F)\m V_F} \cap V_F = \overline{M(F)\m \Sing F} \cap \Sing F,$$ 
	
	\vspace{0.2cm}
	
	The next result ensures the existence of a Milnor tube fibration.
	
	\vspace{0.2cm}
	
	\begin{theorem}[Existence of a Milnor tube fibration, \cite{ART1,Ma}]\label{ttf} Let $F: (\bR^{M},0)\to (\bR^{N},0),$ $M\geq N \geq 2,$ be a real analytic map germ with $\Disc(F)=\{0\}$ and $F$ tame. Then $F$ admits a Milnor tube fibration (\ref{tmil}).
	\end{theorem}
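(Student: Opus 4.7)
The plan is to verify the two submersion conditions required by Ehresmann's fibration theorem on manifolds with corners and then integrate them to a global triviality. First I would note that the hypothesis $\Disc F = \{0\}$ gives $\Sing F \subseteq V_F$, so $F$ is a submersion on $U \setminus V_F$. In particular, $F$ is submersive at every point of the \emph{interior} of $\bar{B}^M_\epsilon \cap F^{-1}(\bar{B}^N_\eta \setminus \{0\})$, regardless of the choice of small $\epsilon,\eta > 0$. This step uses only the discriminant hypothesis and no tameness.

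Next I would exploit tameness to control the behavior on the boundary sphere. Fix $\epsilon_0 > 0$ small enough that Condition \eqref{eq:main} holds inside $\bar{B}^M_{\epsilon_0}$. For any $0 < \epsilon \le \epsilon_0$, the set
\[
C_\epsilon := \overline{M(F)\setminus V_F}\cap S^{M-1}_\epsilon
\]
is compact and, by tameness, disjoint from $V_F$. Hence the continuous function $\|F\|$ attains a positive infimum $m(\epsilon)$ on $C_\epsilon$ (with $m(\epsilon) = +\infty$ if $C_\epsilon = \emptyset$). Choosing any $0 < \eta = \eta(\epsilon) < m(\epsilon)$ ensures that no point of $S^{M-1}_\epsilon \cap F^{-1}(\bar{B}^N_\eta \setminus \{0\})$ lies in $M(F)$; equivalently, $F|_{S^{M-1}_\epsilon}$ is a submersion at every such point, so the sphere is transverse to all nonzero fibers of $F$ nearby.

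With both the interior and boundary submersion conditions in hand, the restricted map
\[
F:\bar{B}^M_\epsilon \cap F^{-1}(\bar{B}^N_\eta \setminus \{0\}) \longrightarrow \bar{B}^N_\eta \setminus \{0\}
\]
is a proper surjective submersion of manifolds with corners, the corners arising along $S^{M-1}_\epsilon \cap F^{-1}(S^{N-1}_\eta)$. Applying Ehresmann's theorem in this setting -- equivalently, lifting radial and angular vector fields on $\bar{B}^N_\eta \setminus \{0\}$ to vector fields tangent to $\partial \bar{B}^M_\epsilon$ via a partition of unity and integrating -- produces the desired local triviality. Independence of the diffeomorphism type on the admissible pair $(\epsilon,\eta)$ then follows from a standard isotopy argument: two admissible pairs can be joined by a smooth path of admissible pairs, and a further application of Ehresmann along this path identifies the corresponding fibrations.

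The main obstacle is the boundary step: a priori the Milnor set could cluster against $V_F$ at points other than the origin, in which case no positive uniform $\eta(\epsilon)$ would exist. Ruling out precisely this pathology is the sharp content of Condition \eqref{eq:main}; the remaining ingredients -- interior submersion from $\Disc F = \{0\}$, Ehresmann for manifolds with corners, and the isotopy for independence -- are standard once the tameness-based estimate has been established.
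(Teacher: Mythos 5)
The paper does not prove Theorem~\ref{ttf}; it is quoted as a known result with references to \cite{ART1} and \cite{Ma}, and your argument is essentially a faithful reconstruction of the standard proof given there: interior submersivity from $\Disc F=\{0\}$, transversality of the nearby fibers to $S^{M-1}_{\epsilon}$ obtained by bounding $\|F\|$ from below on the compact set $\overline{M(F)\setminus V_F}\cap S^{M-1}_{\epsilon}$ (which tameness keeps away from $V_F$), and then Ehresmann with corners plus an isotopy for independence of $(\epsilon,\eta)$. The only point you assert without justification is surjectivity of the restricted map onto $\bar{B}^N_{\eta}\setminus\{0\}$; this follows because the image is open (submersion, including along the boundary stratum) and closed (properness) in the connected set $\bar{B}^N_{\eta}\setminus\{0\}$ (here $N\geq 2$ is used) and is nonempty since $F$ is non-constant analytic.
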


	The fibration below is also called the \it{Milnor tube fibration} and it follows as a direct consequence of the Theorem above.
	
	\begin{corollary}\label{ttf2} Let $F: (\bR^{M},0)\to (\bR^{N},0),$ $M\geq N \geq 2,$ be a real analytic map germ with $\Disc(F)=\{0\}$ and $F$ tame. Then, there exists an $\epsilon_{0}>0$ small enough such that for each $0<\epsilon< \epsilon_{0},$ there exists $\delta$, $0<\delta \ll \epsilon$ such that the restriction map $F_{|}:\bar{B}^{N}_{\epsilon}\cap F^{-1}(S_{\delta}^{N-1})\to S_{\delta}^{N-1}$ is the projection of a smooth locally trivial fibration.  
	\end{corollary}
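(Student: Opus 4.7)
The plan is to deduce the statement as a direct corollary of Theorem~\ref{ttf} by restricting the tube fibration over a punctured ball in the target to a sphere in the target. First, I would apply Theorem~\ref{ttf} to obtain $\epsilon_{0}>0$ and $\eta>0$ so that, for every $0<\epsilon\leq \epsilon_{0}$, the restriction
\[
F_{|}\colon \bar{B}^{M}_{\epsilon}\cap F^{-1}(\bar{B}^{N}_{\eta}\setminus\{0\})\longrightarrow \bar{B}^{N}_{\eta}\setminus\{0\}
\]
is a locally trivial smooth fibration. Fix any such $\epsilon$ and choose $\delta$ with $0<\delta<\eta$; then in particular $0<\delta\ll\epsilon$ as required in the statement.

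The sphere $S^{N-1}_{\delta}$ is a smoothly embedded closed submanifold of the punctured ball $\bar{B}^{N}_{\eta}\setminus\{0\}$. I would then pull back the tube fibration along this inclusion: if $\{U_{\alpha}\}$ is a trivializing open cover of $\bar{B}^{N}_{\eta}\setminus\{0\}$ for $F_{|}$, then $\{U_{\alpha}\cap S^{N-1}_{\delta}\}$ is an open cover of $S^{N-1}_{\delta}$, and restricting the trivializing diffeomorphisms to these opens yields local trivializations of the map
\[
F_{|}\colon \bar{B}^{M}_{\epsilon}\cap F^{-1}(S^{N-1}_{\delta})\longrightarrow S^{N-1}_{\delta},
\]
exhibiting it as a locally trivial smooth fibration. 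This is exactly the sought conclusion.

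The only subtlety worth flagging is that the total space $\bar{B}^{M}_{\epsilon}\cap F^{-1}(S^{N-1}_{\delta})$ must inherit a smooth manifold-with-boundary structure compatible with the local trivializations; this amounts to transversality of the fibers of $F$ with the Milnor sphere $S^{M-1}_{\epsilon}$, and also regularity of $F$ along $F^{-1}(S^{N-1}_{\delta})$ inside the open ball. I do not expect this to be a genuine obstacle, since both properties are already built into Theorem~\ref{ttf}: they follow from the tameness hypothesis together with the smallness of $\delta$ relative to $\epsilon$. Thus the corollary reduces to the elementary fact that the restriction of a locally trivial fibration to a smooth submanifold of its base is again a locally trivial fibration, and no argument beyond Theorem~\ref{ttf} is required.
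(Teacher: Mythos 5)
Your argument is exactly the content behind the paper's one-line justification: the paper states this corollary as a direct consequence of Theorem~\ref{ttf}, and restricting the smooth locally trivial tube fibration over the punctured ball to the sphere $S^{N-1}_{\delta}$ in the base is precisely the intended deduction. Your proposal is correct and matches the paper's (unwritten but implied) proof, including the observation that the needed transversality is already packaged into the theorem.
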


	\section{The Milnor sets of the composites singularities}
	\label{sec:composite}
	
	Let $F:(\mathbb{R}^M, 0) \to (\mathbb{R}^N,0)$ and $G:(\mathbb{R}^N, 0) \to (\mathbb{R}^K,0)$, $ M\ge N \ge K \ge 2 $ be analytic map germs. Let $H:(\mathbb{R}^M, 0) \to (\mathbb{R}^K,0)$ be the composition map germ $H=G \circ F$.
	
	\vspace{0.2cm}
	
	\subsection{The Singular set of a composition}
	
	It follows that $V_{H}:=H^{-1}(0)=F^{-1}(V_{G})$. By the chain rule, the Jacobian matrix $\d H(x)=\d G(F(x))\cdot \d F(x)$, hence 
	\begin{equation}\label{singco}
	\Sing H \subseteq \Sing F \cup F^{-1}(\Sing G).
	\end{equation}
	It follows that if both $\Disc F =\{0\}$ and $\Disc G =\{0\}$ then $\Disc H =\{0\}.$ 
	
	\vspace{0.2cm}
	
	In general, the inclusion \eqref{singco} may be strict even if $\Disc F =\{0\},$ as can be seen in the next example.
	
	\begin{example}
		Consider $ F(x,y,z,w)=(x,y,z(x^2+y^2+z^2+w^2)) $, $ G(u,v,t)=(u,v)$ and $H=G \circ F$. One has $ \Sing G = \emptyset  \subset V_G$, $ \Sing F = \{0\} \subset \{x=y=z=0\} = V_F $ and $ \Sing H = \emptyset .$ 
	\end{example}
	
	\begin{lemma}\label{l0}
		Let $F:(\mathbb{R}^M, 0) \to (\mathbb{R}^N,0)$ and $G:(\mathbb{R}^N, 0) \to (\mathbb{R}^K,0)$, $M\ge N \ge K \ge 2,$ be analytic map germs such that $\Disc F =\{0\}$ and $\{0\} \subseteq \Sing G.$ Then $ \Sing H = F^{-1}(\Sing G)$.
	\end{lemma}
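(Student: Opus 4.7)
The plan is to prove both inclusions, using the chain rule $\d H(x) = \d G(F(x)) \cdot \d F(x)$ together with the two hypotheses $\Disc F = \{0\}$ and $0 \in \Sing G$.

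For the inclusion $F^{-1}(\Sing G) \subseteq \Sing H$, I would take $x \in F^{-1}(\Sing G)$, so that $\rank \d G(F(x)) < K$. Since the rank of a matrix product is at most the minimum of the ranks of the factors, the chain rule gives
\[
\rank \d H(x) = \rank\bigl(\d G(F(x)) \cdot \d F(x)\bigr) \le \rank \d G(F(x)) < K,
\]
so $x \in \Sing H$. Note that this direction does not even require the hypothesis $\Disc F = \{0\}$.

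For the reverse inclusion, I would start from the general containment \eqref{singco}, namely $\Sing H \subseteq \Sing F \cup F^{-1}(\Sing G)$, and show that under the given hypotheses the first term is absorbed into the second. Indeed, $\Disc F = \{0\}$ means $\Sing F \subseteq V_F = F^{-1}(0)$, and since $0 \in \Sing G$ by assumption, we have $F^{-1}(0) \subseteq F^{-1}(\Sing G)$. Chaining these inclusions yields $\Sing F \subseteq F^{-1}(\Sing G)$, hence $\Sing H \subseteq F^{-1}(\Sing G)$.

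There is no real obstacle here: the argument is a direct bookkeeping exercise combining the chain rule with the set-theoretic translation of the two hypotheses. The only point worth emphasizing is the role of each hypothesis: $\Disc F = \{0\}$ forces $\Sing F$ into $V_F$, while $0 \in \Sing G$ forces $V_F$ into $F^{-1}(\Sing G)$; without either one, the inclusion \eqref{singco} can be strict, as illustrated by the example preceding the lemma.
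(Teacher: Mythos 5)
Your proposal is correct and follows essentially the same route as the paper: the containment $\Sing H \subseteq F^{-1}(\Sing G)$ comes from \eqref{singco} together with $\Sing F \subseteq V_F \subseteq F^{-1}(\Sing G)$, and the reverse containment comes from the chain rule (the paper phrases it as $\d G(F(x))$ failing to be onto when restricted to the image of $\d F(x)$, which is the same observation as your rank inequality). Your explicit remark that the inclusion $F^{-1}(\Sing G)\subseteq \Sing H$ needs neither hypothesis is a small but accurate clarification not spelled out in the paper.
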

	\begin{proof}
		If $\Disc F =\{0\}$ and $\{0\} \subseteq \Sing G$ then $ \Sing H \subseteq F^{-1}(\Sing G).$ Conversely, for $ x\in F^{-1}(\Sing G)$, $ y=F(x) \in \Sing G$, then $ \d G(y): \R^{N} \to \R^{K} $ is not onto and, in particular, it is not onto when restricted to the subspace $\d F(x)(\R^M) \subseteq \R^N.$ Thus, $x \in \Sing H$.
	\end{proof}
	
	\begin{remark}\label{cex1} The hypothesis $ \{0\} \subseteq \Sing G $ in Lemma~\ref{l0} cannot be weakened. Indeed, consider $ F(x,y,z,w)=(x,y,z(x^2+y^2+z^2+w^2)) $ and $ G(u,v,t)=(u,t) $. Then $ \Sing G = \emptyset = F^{-1}(\Sing G)$, but $ \Sing H = \{0\}.$
	\end{remark}
	
	From Condition~\eqref{singco}, we know that $\Disc F = \{0\}$ and $\Disc G = \{0\}$ imply $\Disc H = \{0\}$.  If we assume $\{0\} \subseteq \Sing G$, then applying Lemma~\ref{l0} shows that $\Disc F = \{0\}$ and $\Disc H = \{0\}$ imply $\Disc G = \{0\}$.  However, it is not true that $\Disc G = \{0\}$ and $\Disc H = \{0\}$ imply $\Disc F = \{0\}$ (for example, let $G$ be any map with $\Disc G = \{0\}$, and let $F$ be a diffeomorphism).  The following diagram illustrates the connections between the discriminant sets of $F,G$ and $H$. Following the directions of the arrows, each two implies the third. The red cut arrow means that the implication does not hold.  For example: $\Disc G=\{0\}$ and $\Disc H=\{0\}$ does not imply $\Disc F=\{0\}$.

	
	\vspace{0.4cm}
	
	{\color{blue}
		\[
		\begin{tikzcd}[row sep=2.5em]
		& \fbox{$\Disc F=\{0\}$} \arrow[dl, shift left]  \arrow[dr,  shift right] \\
		\fbox{$\Disc G=\{0\}$} \arrow[rr, shift left] \arrow[ur, "/"{anchor=center, sloped}, red, shift left ] && \fbox{$\Disc H=\{0\}$} \arrow[ul, "/"{anchor=center, sloped}, red, shift right] \arrow[ll, shift left]
		\end{tikzcd}
		\]
	}
	
	
	
	
	
	
	
	\subsection{The Milnor set of the composite maps}
	Considering both $F$ and $G$ with an isolated critical value at the origin, one cannot expect in general that $M(H) \subseteq M(F)$. As an example, let $F(x,y,z,w)=(x,y,z)$ and $G(u,v,t)=(u,v(u^2+v^2+t^2))$. Then $M(F)=\{w=0\}$ and $ M(H)=\{x=y=z=0\} \cup \{z=w=0\} $. 
	
	\vspace{0.2cm}
	
	In this subsection we will state and prove some important results concerning the Milnor set of the composite singularity. Once more we point out that all results in this section holds for an arbitrary $\rho: U\to \bR_{\geq 0}$ smooth nonnegative and proper function.

	\vspace{0.2cm}
	
	\begin{lemma}\label{l1}
		Let $F:(\mathbb{R}^M, 0) \to (\mathbb{R}^N,0)$ with $\Disc F =\{0\},$ $G:(\mathbb{R}^N, 0) \to (\mathbb{R}^K,0)$, $ M\ge N \ge K \ge 2 $, be analytic map germs. Then $$\Sing H \subset M(H) \subseteq M(F) \cup \Sing H. $$ In particular, if $ \{0\} \subseteq \Sing G $ then $ M(H) \m \Sing H \subseteq  M(F) \m V_F$.
	\end{lemma}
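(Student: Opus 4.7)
The plan is to exploit the fact that a point $x$ lies in the Milnor set of a map precisely when $\d\rho(x)$ is a linear combination of the rows of the differential of that map. Since the chain rule gives $\d H(x) = \d G(F(x)) \cdot \d F(x)$, the row span of $\d H(x)$ is always contained in the row span of $\d F(x)$, and this algebraic fact is what drives both inclusions. The easy inclusion $\Sing H \subset M(H)$ is immediate: if $\d H(x)$ already has non-maximal rank, then a fortiori the augmented matrix obtained by appending the row $\d\rho(x)$ has non-maximal rank, so $x \in M(H)$.

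For the inclusion $M(H) \subseteq M(F) \cup \Sing H$, I would pick $x \in M(H) \m \Sing H$. The non-singularity of $H$ at $x$ means $\d H_1(x), \ldots, \d H_K(x)$ are linearly independent, so the membership $x \in M(H)$ forces an identity $\d\rho(x) = \sum_{j=1}^K \mu_j \, \d H_j(x)$ for some scalars $\mu_j$. Expanding each $\d H_j(x)$ via the chain rule as a linear combination of $\d F_1(x),\ldots,\d F_N(x)$ rewrites $\d\rho(x)$ as a linear combination of the $\d F_i(x)$'s; hence the family $\d F_1(x),\ldots,\d F_N(x),\d\rho(x)$ is dependent, i.e.\ $x \in \Sing(F,\rho) = M(F)$.

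For the refinement under $\{0\} \subseteq \Sing G$, I would argue by contradiction. Pick $x \in M(H) \m \Sing H$; the first part already places $x \in M(F)$, so it only remains to rule out $x \in V_F$. If $F(x) = 0$, then $F(x) \in \Sing G$, hence $x \in F^{-1}(\Sing G)$. But under the standing hypotheses $\Disc F = \{0\}$ and $\{0\} \subseteq \Sing G$, Lemma~\ref{l0} identifies $F^{-1}(\Sing G)$ with $\Sing H$, forcing $x \in \Sing H$ and contradicting the choice of $x$.

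The main obstacle is really just clean bookkeeping: one must carefully track rank conditions on rectangular matrices and apply the chain rule once. No additional geometric input is needed beyond the rank characterization of the Milnor set and the already-established Lemma~\ref{l0}; in particular, no appeal to the Curve Selection Lemma or to the tameness condition \eqref{eq:main} is required for this statement.
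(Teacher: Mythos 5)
Your proof is correct and follows essentially the same route as the paper's: the chain rule places the rows of $\d H(x)$ in the row span of $\d F(x)$, so for $x\in M(H)\m\Sing H$ the relation $\d\rho(x)=\sum_j\mu_j\,\d H_j(x)$ forces $x\in M(F)$, and the refinement uses $V_F\subseteq F^{-1}(\Sing G)=\Sing H$ via Lemma~\ref{l0} exactly as the authors do (they state this directly rather than by contradiction, but that is only a cosmetic difference).
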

	\begin{proof}
		
		Let $x \in M(H)\m \Sing H$ and consider the matrix
		
		\begin{center}
			$A(x)=\left[ \begin{array}{c}
			\d H(x) \\ 
			\d \rho (x)
			\end{array}\right] = \left[ \begin{array}{c}
			\d G(F(x))\cdot \d F(x)  \\ 
			\d \rho (x)
			\end{array}\right]= \left[ \begin{array}{c}
			\d G_{1}(F(x))\cdot \d F(x)  \\ 
			\vdots \\ 
			\d G_{k}(F(x))\cdot \d F(x)   \\ 
			\d \rho (x)
			\end{array}\right] . $
		\end{center}
		
		Since $x \in M(H)\m \Sing H$, $\textrm{rank}(A(x))$ is not maximal. However, the $\textrm{rank}~\d H(x)$ is maximal and for $j=1,\ldots,k $ each line $\d G_{j}(F(x))\cdot \d F(x) $ is a linear combination of the gradients $\d F_{i}(x), i=1,\ldots, n$ with the coefficients of the partial derivatives of $G_{j}.$ Hence $x\in M(F)$. 
		
		\vspace{0.2cm}
		
		Moreover, if $\{0\} \subseteq \Sing G$ then $ V_F:=F^{-1}(0) \subset F^{-1}(\Sing G)=\Sing H$ and hence $M(H) \m \Sing H \subseteq  M(F) \m V_F.$
	\end{proof}
	
	\vspace{0.2cm}
	
	Next result is an immediate consequence of the Lemma \ref{l1} but we state it for completeness.
	
	\vspace{0.2cm}
	
	\begin{proposition}\label{p1} Let  $F:(\mathbb{R}^M, 0) \to (\mathbb{R}^N,0)$ and  $G:(\mathbb{R}^N, 0) \to (\mathbb{R}^K,0)$, $ M\ge N \ge K \ge 2 $,  analytic map germs such that $G$ is a germ of submersion. Then 
		\begin{enumerate}
			\item [(i)] $ M(H) \subseteq M(F) $; 	
			\item [(ii)] If $N=K$, then  $M(H)=M(F).$ 
		\end{enumerate}
	\end{proposition}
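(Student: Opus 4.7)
The plan is to derive both parts as direct consequences of the rank argument underlying Lemma \ref{l1}, using the single observation that a submersion has empty singular set.

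For part (i), the first step is to note that since $G$ is a submersion, $\Sing G = \emptyset$, and hence by \eqref{singco} we get $\Sing H \subseteq \Sing F$. Moreover, since rank deficiency of $\d F(x)$ forces rank deficiency of the augmented matrix $[\d F(x); \d \rho(x)]$, we have $\Sing F \subseteq M(F)$, whence $\Sing H \subseteq M(F)$. The second step is to apply the inclusion $M(H) \subseteq M(F) \cup \Sing H$ established in the proof of Lemma \ref{l1}---the argument there is a purely rank-theoretic computation built on the factorization $\d H(x) = \d G(F(x))\cdot \d F(x)$ and does not actually use the hypothesis $\Disc F = \{0\}$. Combining with the previous step yields $M(H) \subseteq M(F)$.

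For part (ii), when $N=K$ the submersion $G:(\bR^N,0)\to(\bR^N,0)$ is an analytic local diffeomorphism at the origin, so its inverse germ $G^{-1}$ exists and is itself an analytic submersion. Writing $F = G^{-1}\circ H$ locally and applying part (i) to the pair $(H, G^{-1})$ (in the roles of $F$ and $G$, respectively) gives $M(F) = M(G^{-1}\circ H) \subseteq M(H)$. Combined with the inclusion from part (i), this produces the desired equality $M(H) = M(F)$.

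I do not anticipate any serious obstacle. The only point requiring care is verifying that the rank-based inclusion extracted from Lemma \ref{l1} genuinely does not depend on the hypothesis $\Disc F=\{0\}$, which a direct inspection of the proof of that lemma confirms; once this is in hand, both parts follow by elementary set-theoretic combination.
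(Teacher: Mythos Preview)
Your argument for part (i) coincides with the paper's: both reduce to Lemma \ref{l1}. You are simply more explicit in observing that the rank computation in that lemma does not use the hypothesis $\Disc F=\{0\}$, a point the paper glosses over when it declares (i) an ``immediate application.''

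For part (ii) the routes diverge. The paper proves the reverse inclusion $M(F)\subseteq M(H)$ by a direct linear-algebraic computation: given $x\in M(F)$, write $\d\rho(x)=\lambda(x)\cdot\d F(x)$, use the invertibility of $\d G(F(x))$ to solve $\beta(x)\cdot\d G(F(x))=\lambda(x)$, and conclude $\d\rho(x)=\beta(x)\cdot\d H(x)$. You instead invert $G$ and re-apply part (i) to the factorization $F=G^{-1}\circ H$. Both arguments are correct. Yours is more structural and avoids repeating the row-space manipulation; the paper's is more hands-on and sidesteps the (admittedly routine) appeal to analyticity of $G^{-1}$. Note that your symmetry argument needs $N=K$ precisely so that the dimension constraint $M\ge K\ge N\ge 2$ in the re-application of part (i) is satisfied, which is consistent with the hypothesis.
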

	
	\begin{proof} The item $(i)$ is an immediate application of the Lemma \ref{l1}. 
		
		\vspace{0.2cm}
		
		For the item $(ii)$ it is enough to prove that the inclusion $ M(F) \subset M(H)$ holds true. For that, take $x\in M(F)$ and $\lambda(x) \in \bR^N$ such that $ \d \rho(x) = \lambda(x)\cdot \d F(x) $. If $N=K$ then $G$ is a local diffeomorphism in a neighborhood of the origin, and hence $ \d G(F(x)): \R^N \to \R^N $ is an isomorphism for all $x$ close enough to the origin. Thus, one can find a unique $\beta(x) \in \R^N $ such that $ \beta(x)\cdot \d G(F(x)) = \lambda(x) $. Consequently, $ \d \rho(x) = \beta(x) \cdot \d G(F(x))\cdot \d F(x) =\beta(x)\cdot \d(G\circ F)(x)=\beta(x)\cdot \d H(x)$. Therefore, $ x\in M(H) $ and $M(H)=M(F).$
	\end{proof}

	\begin{corollary}
		Let $ F_1,F_2:(\mathbb{R}^M, 0) \to (\mathbb{R}^N,0)$ be analytic map germs that are $C^\infty - \mathcal{L}-$equivalent. Then $M(F_1)=M(F_2)$.  Furthermore, $F_1$ is tame if and only if $F_2$ is tame.
	\end{corollary}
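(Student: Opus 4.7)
The plan is to recognize that $C^\infty$-$\mathcal{L}$-equivalence means $F_2 = \psi \circ F_1$ for some germ of $C^\infty$ diffeomorphism $\psi:(\mathbb{R}^N,0)\to(\mathbb{R}^N,0)$, and then to read the statement off as an essentially immediate corollary of Proposition~\ref{p1}(ii).

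First I would unfold the definition: $F_2 = \psi \circ F_1$ exhibits $F_2$ as a composite with $F = F_1$ and $G = \psi$. Since $\psi$ is a diffeomorphism between equidimensional spaces it is in particular a germ of submersion, and the target dimensions satisfy $N = K$. Hence the hypotheses of Proposition~\ref{p1}(ii) are met (note that although the proposition is formulated for analytic maps, the proof of part (ii) uses only the chain rule and the pointwise invertibility of $d\psi$, which carry over verbatim to the $C^\infty$ setting), and that proposition applied to $H = \psi \circ F_1$ yields $M(F_2) = M(F_1)$.

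For the tameness assertion I would then observe that $V_{F_2} = F_1^{-1}(\psi^{-1}(0)) = F_1^{-1}(0) = V_{F_1}$, because $\psi$ is a bijection fixing the origin. Combined with $M(F_1) = M(F_2)$, the set germ $\overline{M(F_i)\setminus V_{F_i}} \cap V_{F_i}$ is \emph{literally} the same for $i = 1$ and $i = 2$, so the condition $\overline{M(F_i)\setminus V_{F_i}}\cap V_{F_i} \subseteq \{0\}$ holds for one index iff it holds for the other.

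There is no genuine obstacle: the argument is a two-step unpacking of Proposition~\ref{p1}(ii), together with the trivial observation that both the Milnor set and the zero set are preserved under post-composition with a target diffeomorphism. The only place requiring a comment is justifying that the proof of Proposition~\ref{p1}(ii) goes through for a merely $C^\infty$ (rather than analytic) $\psi$, which is immediate from its proof.
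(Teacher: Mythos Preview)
Your proof is correct and follows essentially the same route as the paper: apply Proposition~\ref{p1}(ii) with the diffeomorphism playing the role of $G$ to get $M(F_1)=M(F_2)$, then observe that the zero (or singular) loci also coincide, so the tameness condition is literally the same for both germs. Your extra remark that the proof of Proposition~\ref{p1}(ii) goes through for a $C^\infty$ diffeomorphism is a welcome clarification that the paper leaves implicit.
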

	
	\begin{proof}
		Our hypothesis says there is a diffeomorphism germ $G:(\mathbb{R}^N, 0) \to (\mathbb{R}^N, 0)$ such that $F_1 = G \circ F_2$.  It follows from Proposition \ref{p1} that $M(F_1) = M(F_2)$, and $\Sing F_1 = \Sing F_2$ is immediate.  Thus Condition~\eqref{eq:main} is equivalent for both functions.
	\end{proof}
	
	\vspace{.2cm}
	
	Right-equivalence (and thus $\mathcal{A}$-equivalence) will not preserve $M(F)$. In fact, it will not even preserve the topological structure of $M(F)$.  For example, for the family of maps $F(x, y, z) = (x y, y z(a x^2+y^2+z^2))$ with $a \ge 1$, $M(F)$ transitions from the union of a cone and a plane when $a<9$ to a double cone and a plane when $a>9$, even though all maps in the family are $\mathcal{A}$-equivalent (see Figure~\ref{aequiv}).  Thus we cannot expect tameness with respect to a particular $\rho$ to be preserved.  However, we can say that the map will be tame with respect to some proper function.  
	
	\begin{figure}	
		\includegraphics[width=.9\textwidth]{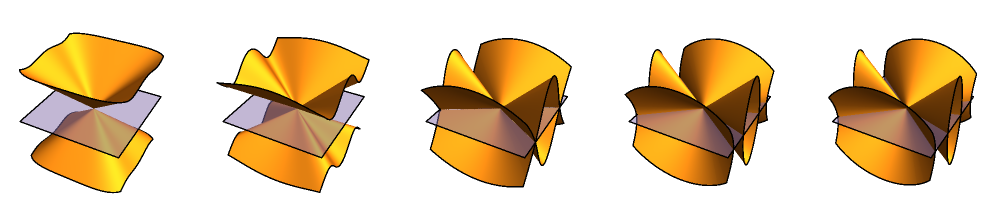}
		\caption{$M(F)$ for $(x y, y z (a x^2+y^2+z^2))$ for $a=3, 6, 9, 12, 15$ }
		\label{aequiv}
	\end{figure}

	\begin{proposition}\label{prop2}
		Let $G_1,G_2:(\mathbb{R}^M, 0) \to (\mathbb{R}^N,0)$ be analytic map germs that are $C^\infty - \mathcal{A}-$equivalent.  If $G_1$ is tame with respect to a proper function $\rho_1$, then there exists another proper function $\rho_2$ such that $G_2$ is tame with respect to $\rho_2$.
	\end{proposition}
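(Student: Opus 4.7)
The plan is to define $\rho_2$ as the pullback of $\rho_1$ along the source diffeomorphism of the $\mathcal{A}$-equivalence, and then use a chain-rule calculation to verify that both the zero fiber and the Milnor set transport cleanly along this diffeomorphism. Writing $G_2 = \psi \circ G_1 \circ \phi^{-1}$ for smooth diffeomorphism germs $\phi:(\mathbb{R}^M,0)\to(\mathbb{R}^M,0)$ and $\psi:(\mathbb{R}^N,0)\to(\mathbb{R}^N,0)$, I would set $\rho_2 := \rho_1 \circ \phi^{-1}$. Because $\phi^{-1}$ is a local diffeomorphism fixing the origin, $\rho_2$ inherits smoothness, nonnegativity, and vanishing only at $0$ from $\rho_1$; properness can be arranged by restricting to where $\phi^{-1}$ is defined, or by extending via a smooth cutoff outside, neither of which affects the germ-level condition.

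The core calculation is to show $M_{\rho_2}(G_2) = \phi(M_{\rho_1}(G_1))$. Setting $y = \phi^{-1}(x)$, the chain rule gives
\begin{equation*}
\begin{pmatrix} d G_2(x) \\ d \rho_2(x) \end{pmatrix}
= \begin{pmatrix} d\psi(G_1(y)) & 0 \\ 0 & 1 \end{pmatrix} \begin{pmatrix} d G_1(y) \\ d \rho_1(y) \end{pmatrix} d\phi^{-1}(x),
\end{equation*}
and since both $d\phi^{-1}(x)$ and the block-diagonal factor $\mathrm{diag}(d\psi(G_1(y)),1)$ are invertible, the two stacked matrices have the same rank. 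Hence $x \in M_{\rho_2}(G_2)$ if and only if $y \in M_{\rho_1}(G_1)$, so $M_{\rho_2}(G_2) = \phi(M_{\rho_1}(G_1))$. Also $V_{G_2} = \phi(V_{G_1})$ because $\psi^{-1}(0) = 0$.

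With these identifications, tameness transfers immediately: $\phi$ is a homeomorphism of germs at the origin, so it commutes with closure, set difference, and intersection, giving
\begin{equation*}
\overline{M_{\rho_2}(G_2) \setminus V_{G_2}} \cap V_{G_2} \;=\; \phi\!\left(\overline{M_{\rho_1}(G_1)\setminus V_{G_1}} \cap V_{G_1}\right) \;\subseteq\; \phi(\{0\}) \;=\; \{0\},
\end{equation*}
which is exactly Condition~\eqref{eq:main} for $G_2$ with respect to $\rho_2$. I do not expect any significant obstacle here; the geometric content is the rank-preservation calculation above, and the only delicate point is the bookkeeping needed to confirm that the pulled-back $\rho_2$ meets the technical requirements of a proper function in the sense of the paper, which follows directly from $\phi^{-1}$ being a diffeomorphism germ at $0$.
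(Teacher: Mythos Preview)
Your proof is correct and follows essentially the same approach as the paper: define $\rho_2$ as the pullback of $\rho_1$ along the source diffeomorphism, then use the chain rule to show that the Milnor set and zero fiber transport along that diffeomorphism, so that tameness is preserved. The only organizational difference is that the paper first invokes the preceding corollary to dispose of the target diffeomorphism (left-equivalence preserves $M(F)$ for the \emph{same} $\rho$) and then handles the source diffeomorphism separately, whereas you treat both at once via the block-diagonal factorization $\mathrm{diag}(d\psi,1)$; this is a cosmetic distinction, not a different idea.
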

	
	\begin{proof}
		In light of the above corollary, we only need to prove this for right-equivalence.  So, assume that there exists a diffeomorphism germ $F:(\mathbb{R}^M, 0) \to (\mathbb{R}^M, 0)$ with $G_1 \circ F = G_2$.  Define $\rho_2 = \rho_1 \circ F$.  Since $\rho_1$ is a proper function and $F$ is a diffeomorphism that maps the origin to itself, $\rho_2$ is also a proper function.  
		
		Since $\d G_2(x)  = \d G_1(F(x)) \cdot \d F(x)$ and $\d F(x)$ is nonsingular, it follows that $\Sing G_2 = F^{-1}(\Sing G_1)$.  Also, since $\rho_2 = \rho_1 \circ F$, we have $\d \rho_2(x) = \d \rho_1(F(x)) \cdot \d F(x)$.  It follows that $M(G_2) = \Sing (G_2, \rho_2) = F^{-1}(\Sing (G_1, \rho_1)) = F^{-1}(M(G_1))$, where $M(G_2)$ is calculated with respect to $\rho_2$ and $M(G_1)$ is calculated with respect to $\rho_1$.  Consequently, $$\overline{M(G_2) \m \Sing G_2} \cap \Sing G_2 = F^{-1}(\overline{M(G_1) \m \Sing G_1} \cap \Sing G_1)$$
		and so the tameness of $G_1$ with respect to $\rho_1$ implies the tameness of $G_2$ with respect to $\rho_2$.
	\end{proof}
	
	To recap: Tameness with respect to a particular proper function $\rho$ is left-invariant (but not right or $\mathcal{A}$-invariant), while being tame with respect to some proper function is $\mathcal{A}$-invariant.

	\begin{corollary} 
		Let $G:(\mathbb{R}^M, 0) \to (\mathbb{R}^N,0)$ be analytic map germ  and $F:(\mathbb{R}^M, 0) \to (\mathbb{R}^M,0)$ a smooth diffeomorphism map germ. If $G$ is tame with respect to a proper function $\rho$, then the pullback map germ $F^{*}G$ is tame with respect to the pullback function $F^{*}\rho .$
	\end{corollary}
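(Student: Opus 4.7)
The plan is to observe that this corollary is essentially a restatement of the right-equivalence case handled inside the proof of Proposition~\ref{prop2}. Since $F$ is a diffeomorphism germ, the pullback $F^{*}G = G\circ F$ is right-equivalent to $G$, and right-equivalence is a particular case of $\mathcal{A}$-equivalence (with trivial left component). Therefore I would simply invoke Proposition~\ref{prop2}, but additionally remark that the construction in that proof produces exactly the proper function $\rho_{2} = \rho_{1}\circ F = F^{*}\rho$, so the witness of tameness for $F^{*}G$ is precisely $F^{*}\rho$, as claimed.

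Concretely, the key steps would be: first, verify that $F^{*}\rho = \rho\circ F$ is a smooth nonnegative proper function vanishing only at the origin; this is immediate because $F$ is a diffeomorphism germ fixing the origin, so $F^{*}\rho \ge 0$, $F^{*}\rho$ is proper, and $(F^{*}\rho)(x)=0$ iff $F(x)=0$ iff $x=0$. Second, using the chain rule $\d(G\circ F)(x) = \d G(F(x))\cdot \d F(x)$ and $\d(F^{*}\rho)(x) = \d\rho(F(x))\cdot \d F(x)$, together with the nonsingularity of $\d F(x)$, conclude that
\[
\Sing(F^{*}G) = F^{-1}(\Sing G), \qquad M_{F^{*}\rho}(F^{*}G) = F^{-1}(M_{\rho}(G)).
\]
Third, since $F$ is a homeomorphism germ fixing the origin, taking preimages under $F$ commutes with closures and intersections as germs at $0$, so
\[
\overline{M_{F^{*}\rho}(F^{*}G)\m V_{F^{*}G}}\cap V_{F^{*}G} \;=\; F^{-1}\!\left(\overline{M_{\rho}(G)\m V_{G}}\cap V_{G}\right).
\]
Finally, the tameness hypothesis on $G$ says the right-hand set-germ is contained in $\{0\}$, and since $F^{-1}(0)=\{0\}$, we obtain tameness of $F^{*}G$ with respect to $F^{*}\rho$.

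The argument is essentially mechanical once Proposition~\ref{prop2} is in hand; there is no genuine obstacle. The only point that requires a sentence of care is the commutation of the closure with $F^{-1}$ in the third step, but this is automatic because $F$ is a diffeomorphism germ and hence in particular a homeomorphism in a neighbourhood of $0$. I would therefore present the corollary as a one-paragraph proof: specialize Proposition~\ref{prop2} to the case where the left component is the identity, note that the proper function produced there is exactly $F^{*}\rho$, and conclude.
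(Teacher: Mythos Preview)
Your proposal is correct and matches the paper's approach exactly: the paper's proof is the single sentence ``This follows from the proof of Proposition~\ref{prop2},'' and you have simply unpacked that reference, noting that the proper function constructed there is precisely $F^{*}\rho$. Your additional remarks about $F$ being a homeomorphism to justify commuting $F^{-1}$ with closures are fine and make the argument more self-contained than the paper's version.
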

	
	\begin{proof}
		This follows from the proof of Proposition \ref{prop2}.   
	\end{proof}

	\section{Tameness of the composite singularities}
	Consider the following diagram of analytic map germs 
	
	$$\begin{tikzcd}
	(\bR^{M},0)\rar{F}\arrow[bend right]{rr}[black,swap]{H=G\circ F}  & (\bR^{N},0) \rar{G}  & (\bR^{K},0)
	\end{tikzcd}$$
	
	We have seen that if $\Disc F =\{0\}$ and $\Disc G =\{0\}$ then $\Disc H =\{0\}.$ To obtain a Milnor tube fibration for $H$ induced from the composite map germs $F$ and $G$, we first ask the following natural question:
	
	\vspace{0.2cm}
	
	\begin{question}\label{q1}
		If the map germs $F:(\mathbb{R}^M, 0) \to (\mathbb{R}^N,0)$ and  $G:(\mathbb{R}^N, 0) \to (\mathbb{R}^K,0)$,  $ M\ge N \ge K \ge 2,$ are both tame, is it true that the composition map germ $H=G\circ F$ is also tame?	
	\end{question}
	
	This is a very natural question to ask, and we should point out that it is motivated by some particular cases where the answer is known to be yes.  In addition to some cases listed in the Introduction, we briefly review some of the known cases below.
	
	\vspace{0.2cm}
	
	If $G:(\mathbb{R}^N,0) \to (\mathbb{R}^K,0), G(y_{1},\ldots, y_{N})=(y_{1},\ldots, y_{K})$ is the canonical projection and $F$ is tame, then $H$ must be tame.  To our knowledge, for $F$ with an isolated singular point at the origin, it was first approached by Y. Iomdin in \cite{Io}. It was also approached in \cite{ADD} by using completely independent tools and techniques, where they also proved formulae for the Euler characteristics of the Milnor fibers. Later, for the general case of $F$ tame with nonisolated singularities ($\Disc F =\{0\}$) it was approached by N. Dutertre and R. Ara\'ujo dos Santos in \cite[Theorem 6.3, p. 4860]{DA}, as a consequence of answering a conjecture stated by Milnor in \cite[p.100]{Mi} which claimed that for such a $F$ and $G$ the fiber of $H=G\circ F$ in the Milnor tube fibration is homeomorphic to the Milnor fiber of $F$ times the $(N-K)$-dimensional closed cube $[0,1]^{N-K}$. 
	
	\vspace{0.2cm}
	Recently in \cite{CT23} the authors considered the very next case $G$ with $\Sing G \subseteq \{0\}$ and proved that the answer to the Question \eqref{q1} is also yes.
	
	\vspace{0.2cm}
	
	Unfortunately, for $G$ with $\dim \Sing G >0$ the answer in general is no, as can be seen below in the Example \ref{nottame}. However, with the condition of $F$ being tame, we introduce a special condition, Condition \eqref{tame} in Theorem \ref{eqHtame}, which provides a characterization of the tameness of $H$. 
	
	\vspace{0.2cm}
	
	We should point out that any map germ $G$ with $\Sing G \subseteq \{0\}$ naturally satisfies our Condition \eqref{tame}. Therefore, that is the reason\footnote{Of course, under the general hypothesis of $F$ being tame with $\Disc F =\{0\}.$} of the tameness of $H=G\circ F$ in the following cases: G is the canonical projection (or any submersion in general) or $G$ has an isolated singular point at the origin. 
	
	\vspace{0.3cm}
	
	The next example provides a negative answer for Question \eqref{q1} in general.
	
	\vspace{0.2cm}
	
	\begin{example}\label{nottame} Let $F:\bR^{4}\rightarrow \bR^{3}$ and $G:\bR^{3}\rightarrow \bR^{2}$ two real analytic maps germ giving by $F(x,y,z,w):=(x,y,z(x^{2}+y^{2}+z^{4}))$ and $G(u,v,t):=(uv,vt)$. Consequently, $H(x,y,z,w)=(xy,yz(x^2+y^2+z^4))$.  By straightforward calculation, one has that $F,G$ are tame, $\Disc F=\{0\}$, $\Disc G=\{0\}$ and 
		$M(H)=\{y=0\}\cup \{w=0,p(x,y,z)=0\}$, where $p(x,y,z)=x^{4}+5 x^{2} z^{4}-x^{2} z^{2}-y^{4}-5 y^{2} z^{4}+3 y^{2} z^{2}+z^{6}$. Now, setting $b(y,z):=z^{2}(5z^{2}-1)$ and $c(y,z):=-y^4+(3z^{2}-5z^{4})y^{2}+z^{6}$, then 
		
		\begin{center}
			$p(x,y,z)=x^{4}+b(y,z)x^{2}+c(y,z)=0.$
		\end{center}
		
		By continuity, it is possible to choose an $\epsilon>0$ such that for all $y,z\in \bR$ with $\epsilon>|y|,|z|>0$ one has that $b(y,0)^{2}-4c(y,0)\geq 0$ and consequently, the branch 
		
		\begin{center}
			$x(y,z)=\sqrt{\frac{(1-5z^{2})z^{2}+\sqrt{b(y,z)^{2}-4c(y,z)}}{2}}$
		\end{center}
		
		is well defined for all $y,z\in \bR$ with $\epsilon>|y|,|z|>0$.
		
		By construction, the branch $\phi(y,z):=(x(y,z),y,z,0)\subset M(H)\setminus \Sing H$ for all $y,z\in \bR$ with $\epsilon>|y|,|z|>0$. Consequently $\phi(0,z)\in \overline{ M(H)\setminus \Sing H}\cap\Sing H$, for all $z\in \bR$ with $|z|<\epsilon$ and $H$ is not tame. See Figure \ref{pic-ivan1} for an illustration.
		
		\begin{figure}	
			\includegraphics[width=.4\textwidth]{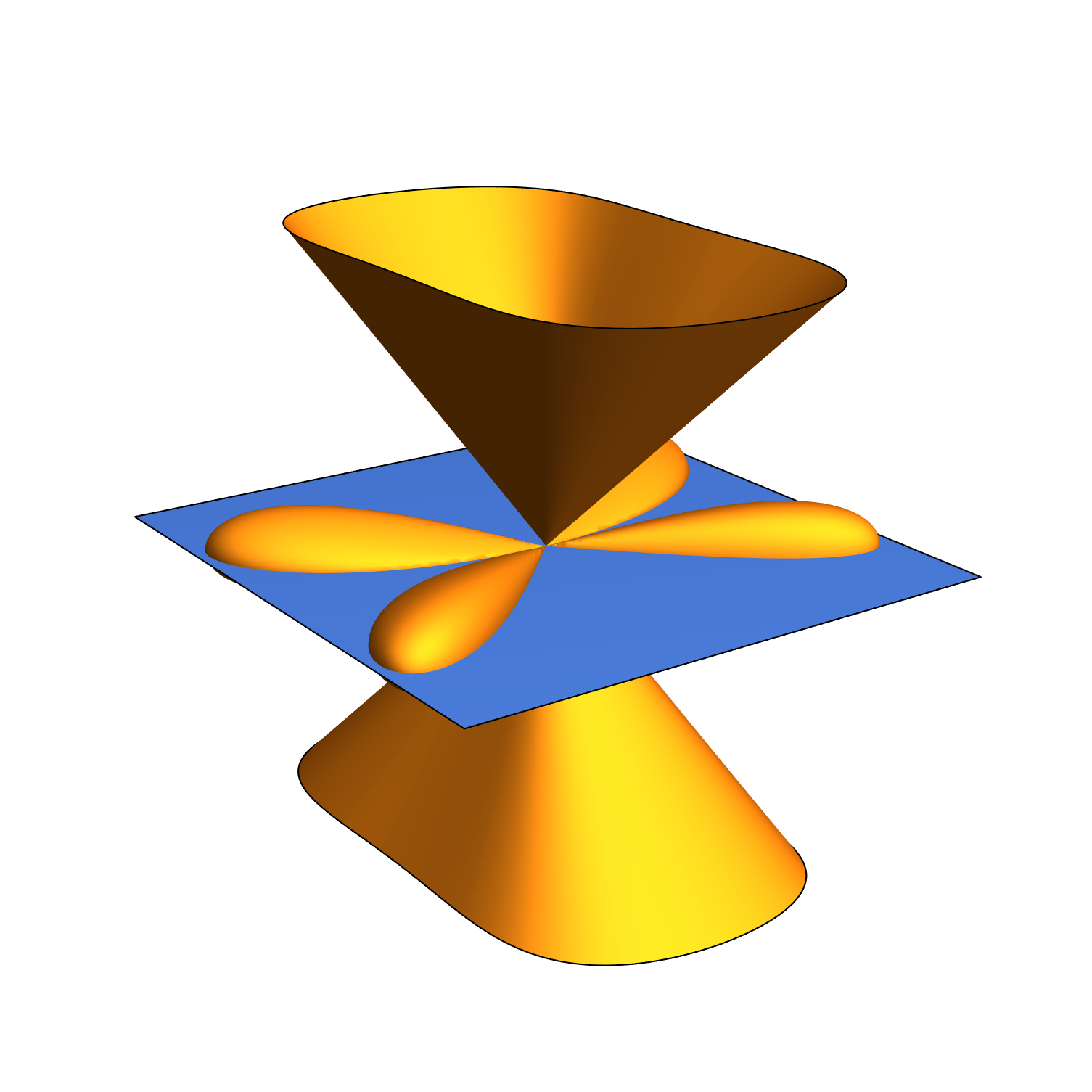}
			\caption{$M(H)$ (yellow and blue) and $\Sing H$ (blue) for Example~\ref{nottame}}
			\label{pic-ivan1}
		\end{figure}
	\end{example} 
	
	\subsection{Main results and its consequences}
	
	Before we prove the main Theorem of this section let us recall some basic facts on the properness of a map and also fix some notations.
	
	\begin{remark}\label{clo} Let $F,G$ and $H=G\circ F$ be analytic map germs as before. For all $\epsilon >0$ small enough, denote by $M(H):=M(H)\cap \bar{B}_{\epsilon}$, similarly, $M(F):=M(F)\cap \bar{B}_{\epsilon}$, where all sets are considered as a germ of sets once they are well defined. An easy exercise shows that 
		\begin{equation}\label{clo1}
		F(\overline{M(H)\setminus \Sing H})=\overline{F(M(H)\setminus \Sing H)}= \overline{F(M(H))\setminus \Sing G}. 
		\end{equation}
	\end{remark}
	
	We now state our main theorem.
	
	\begin{theorem}\label{eqHtame}
		For $ M\ge N \ge K \ge 2,$ $F:(\mathbb{R}^M, 0) \to (\mathbb{R}^N,0)$ tame with $\Disc F=\{0\}$ and $G:(\mathbb{R}^N, 0) \to (\mathbb{R}^K,0)$ with $\{0\} \subseteq \Sing G,$ then the composition $H=G\circ F$ is tame if and only if the following condition holds true:
		\begin{equation}\label{tame}
		\overline{F(M(H)\setminus \Sing H)}\cap \Sing G\subseteq\{0\}.
		\end{equation}
		
	\end{theorem}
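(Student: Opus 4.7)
The plan is to exploit the reformulation of tameness as $\overline{M(\cdot)\setminus \Sing \cdot}\cap \Sing \cdot \subseteq \{0\}$ (obtained in the paragraph following Lemma~\ref{reg}) and to transport Condition~\eqref{tame} across $F$ using the continuity commutation $F(\overline{M(H)\setminus \Sing H})=\overline{F(M(H)\setminus \Sing H)}$ recorded in Remark~\ref{clo}. Two structural facts from the previous section drive everything: Lemma~\ref{l0}, which under our hypotheses yields $\Sing H=F^{-1}(\Sing G)$, so that $x\in \Sing H$ iff $F(x)\in \Sing G$; and Lemma~\ref{l1}, which gives the key containment $M(H)\setminus \Sing H\subseteq M(F)\setminus V_F$ and hence, after taking closures, $\overline{M(H)\setminus \Sing H}\subseteq \overline{M(F)\setminus V_F}$.

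For the forward implication I would assume $H$ is tame and pick $y\in \overline{F(M(H)\setminus \Sing H)}\cap \Sing G$ (inside a small closed ball around the origin). Remark~\ref{clo} produces an $x\in \overline{M(H)\setminus \Sing H}$ with $F(x)=y$; since $F(x)=y\in \Sing G$, Lemma~\ref{l0} gives $x\in \Sing H$. Consequently $x\in \overline{M(H)\setminus \Sing H}\cap \Sing H\subseteq \{0\}$ by the tameness of $H$, so $x=0$ and $y=F(0)=0$, establishing Condition~\eqref{tame}.

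For the converse, assume Condition~\eqref{tame} and take $x\in \overline{M(H)\setminus \Sing H}\cap \Sing H$. Continuity of $F$ combined with Remark~\ref{clo} places $F(x)\in F(\overline{M(H)\setminus \Sing H})=\overline{F(M(H)\setminus \Sing H)}$, while Lemma~\ref{l0} gives $F(x)\in \Sing G$. Thus $F(x)\in \overline{F(M(H)\setminus \Sing H)}\cap \Sing G\subseteq \{0\}$, which forces $F(x)=0$, i.e.\ $x\in V_F$. Combining this with $\overline{M(H)\setminus \Sing H}\subseteq \overline{M(F)\setminus V_F}$ from Lemma~\ref{l1} yields $x\in \overline{M(F)\setminus V_F}\cap V_F$, and the tameness of $F$ forces $x=0$; this is exactly the tameness of $H$. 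The main technical point is the commutation of closure with $F$ supplied by Remark~\ref{clo}, which depends on restricting to a closed ball where $M(H)\setminus \Sing H$ has compact closure and $F$ is continuous; without that compactness, the lifting step in the forward direction (extracting $x$ from a sequence $\{x_n\}$ with $F(x_n)\to y$) would not be available and the equivalence would have to be argued in a less transparent way.
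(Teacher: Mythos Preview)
Your proof is correct and follows essentially the same approach as the paper's own proof. The paper argues with sequences $\{y_n\}\to 0$ and $\{x_n\}\to 0$ rather than single points in the germ, but the logical structure is identical: in one direction lift via Remark~\ref{clo} and apply tameness of $H$, and in the other push forward by $F$, apply Condition~\eqref{tame} to land in $V_F$, and then invoke Lemma~\ref{l1} together with the tameness of $F$; you are in fact slightly more explicit than the paper in citing Lemma~\ref{l0} for the identification $\Sing H=F^{-1}(\Sing G)$.
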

	
	\begin{proof}
		To prove the ``if'' implication, take a sequence $\{y_{n}\}\subset \overline{F(M(H)\setminus \Sing H)}\cap \Sing G$ such that $y_{n}\to 0.$ 
		
		\vspace{0.2cm}
		
		From Remark \ref{clo}, the first equality of \eqref{clo1}, it follows that there exists a sequence $\{x_{n}\}\subset \overline{M(H)\setminus \Sing H}$ such that $y_{n}=F(x_{n}).$ 
		
		\vspace{0.2cm}
		
		Since ${y_{n}}\in \Sing G$ also, one has $x_{n}\in \overline{M(H)\setminus \Sing H} \cap \Sing H.$ By the tameness of $H$ there must be a natural number $n_{0}>0$ great enough such that for all $n\geq n_{0}$ one has $x_{n}=0.$ Hence, $y_{n}=F(0)=0$ for all $n$ great enough and the ``if'' condition proved.
		
		\vspace{0.2cm}
		
		For the converse implication, let us now take a sequence $\{x_{n}\}\subset \overline{M(H)\setminus \Sing H} \cap \Sing H$, such that $ x_{n}\to 0$.
		
		\vspace{0.2cm}
		
		Thus $y_{n}=F(x_{n})\in F(\overline{M(H)\setminus \Sing H})\cap \Sing G.$ Applying Condition \eqref{tame} there must be a great enough $n_{0}>0$  such that for all $n\geq n_{0}$, $y_{n}=0$ and thus $x_{n}\in F^{-1}(0)$.
		
		\vspace{0.2cm}
		
		Hence we conclude that $x_{n}\in \overline{M(H)\setminus \Sing H} \cap V_{F}\subseteq \overline{M(F)\setminus V_{F}}\cap V_{F}\subseteq \{0\},$ where the first inclusion follows by Lemma \ref{l1} and the second inclusion follows by the tameness of $F.$ Therefore, for all $n$ great enough the sequence $x_{n}=0$ and the proof is finished.
	\end{proof}

	\begin{remark}
		In light of Theorem \eqref{eqHtame}, we can graph $\overline{F(M(H)\m \Sing H)}$ and $\Sing G$ for Example~\ref{nottame} (see Figure~\ref{pic-ivan2}). Doing so, we see that the intersection of the sets is not contained in $\{0\}$, showing that Example~\ref{nottame} fails Condition~\ref{tame}.  
	\end{remark}
	
	\begin{figure}	
		\includegraphics[width=.4\textwidth]{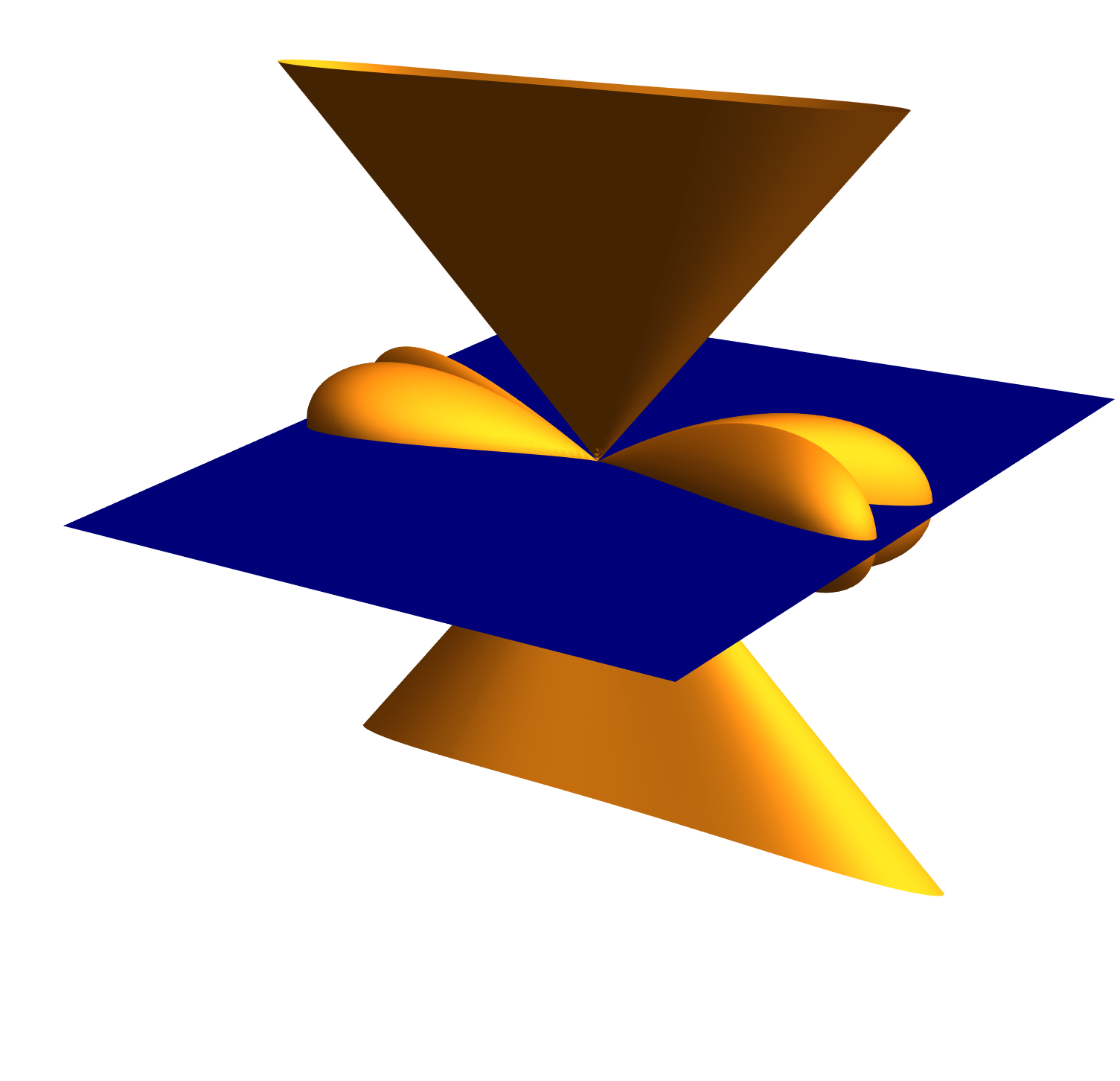}
		\caption{$\overline{F(M(H)\setminus \Sing H)}$ (yellow) and $\Sing G$ (blue) for Example~\ref{nottame}}
		\label{pic-ivan2}
	\end{figure}

	\begin{corollary}[\cite{DA}, Lemma 4.1]
		Let $F:(\mathbb{R}^M, 0) \to (\mathbb{R}^N,0)$, $ M\ge N \ge 3 $, be a  non-constant analytic map germs such that $F$ is tame and $ \Disc F = \{0\} $.  If $\pi:(\mathbb{R}^N, 0) \to (\mathbb{R}^{N-1},0)$ is the canonical projection, then the composition $ \pi \circ F $ is tame and $\Disc (\pi \circ F) = \{0\}$. Hence, there exist Milnor tube fibrations for both $F$ and $\pi \circ F$.
	\end{corollary}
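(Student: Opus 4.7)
The plan is to invoke Proposition~\ref{p1}(i) and the tameness of $F$, with a short transversality argument to close the small gap that arises because $\Sing\pi=\emptyset$ fails the hypothesis $\{0\}\subseteq \Sing G$ of Theorem~\ref{eqHtame}. First I would verify $\Disc H=\{0\}$: because $\pi$ is a submersion, $\Sing\pi=\emptyset$, so inclusion \eqref{singco} reads $\Sing H\subseteq \Sing F$, and hence $\Disc H=\pi(F(\Sing H))\subseteq \pi(\Disc F)=\{0\}$.

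Next I would establish the tameness of $H$, i.e.\ that $\overline{M(H)\setminus V_H}\cap V_H\subseteq\{0\}$. Since $\pi$ is a submersion, Proposition~\ref{p1}(i) yields $M(H)\subseteq M(F)$, and the inclusion $V_F=F^{-1}(0)\subseteq F^{-1}(\pi^{-1}(0))=V_H$ gives $M(H)\setminus V_H\subseteq M(F)\setminus V_F$. Given $x\in \overline{M(H)\setminus V_H}\cap V_H$ and a sequence $x_n\in M(H)\setminus V_H$ with $x_n\to x$, I would split into two cases. If $x\in V_F$, then the tameness of $F$ forces $x\in \overline{M(F)\setminus V_F}\cap V_F\subseteq\{0\}$, hence $x=0$.

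The delicate case is $x\in V_H\setminus V_F$. Here $F(x)$ is a nonzero point of the line $\pi^{-1}(0)$, so $F(x)\notin\Disc F$; consequently $x\notin\Sing F$ and both $F$ and $H=\pi\circ F$ are submersions on some neighborhood $U$ of $x$. Thus $V_H\cap U$ is a smooth submanifold, and Lemma~\ref{reg} applied to $Y=V_H$ and $g=\rho$ shows that $V_H$ is transverse to $S_{\|x\|}$ at $x$. Equivalently the combined Jacobian of $(H,\rho)$ has maximal rank at $x$, and by continuity it has maximal rank on some smaller neighborhood $U'\subseteq U$; therefore $M(H)\cap U'=\emptyset$, contradicting $x_n\to x$. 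This rules out the second case, yielding $x=0$ and the tameness of $H$. With $\Disc H=\{0\}$ and $H$ tame, Theorem~\ref{ttf} supplies the Milnor tube fibrations for both $F$ and $\pi\circ F$. The main difficulty is precisely this second case; once one recognizes that $H$ is locally a submersion along $V_H\setminus V_F$ (the only place where $\Disc F=\{0\}$ is really used), transversality with spheres plus Lemma~\ref{reg} closes the argument.
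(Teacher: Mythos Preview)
Your proof is correct. In fact it is more careful than the paper's own argument, which simply invokes Theorem~\ref{eqHtame} and observes that $\Sing\pi=\emptyset$ makes Condition~\eqref{tame} vacuous; the paper does not pause over the fact that the hypothesis $\{0\}\subseteq\Sing G$ of Theorem~\ref{eqHtame} is not literally met. You noticed this gap and gave a direct argument instead: Proposition~\ref{p1}(i) yields $M(H)\subseteq M(F)$, the inclusion $V_F\subseteq V_H$ handles the case $x\in V_F$ via the tameness of $F$, and you dispose of the remaining case $x\in V_H\setminus V_F$ by observing that $H$ is a submersion there (since $\Disc F=\{0\}$ and $\pi$ is a submersion) and then using Lemma~\ref{reg} to see that $V_H$ meets the sphere transversally at $x$, hence $x\notin M(H)$.

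The two approaches reach the same destination: the paper's route is terser but tacitly relies on the proof of Theorem~\ref{eqHtame} still going through when $\Sing G=\emptyset$, which it does after a moment's inspection (in that case $\Sing H\subseteq\Sing F\subseteq V_F$, so the sequence lands in $V_F$ immediately and one appeals to the tameness of $F$). Your route is self-contained and makes explicit the transversality step (essentially the same observation recorded just after Lemma~\ref{reg}, namely $M(H)\cap(V_H\setminus\Sing H)=\emptyset$), which is what the case $x\in V_H\setminus V_F$ really hinges on. Either way, the existence of the Milnor tube fibrations then follows from Theorem~\ref{ttf}.
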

	
	\begin{proof} The proof follows easily as a consequence of Theorem \eqref{eqHtame} because $F$ is tame with $\Disc F =\{0\}.$ Moreover, since $\pi$ is a submersion,  $\Sing \pi =\emptyset$ and Condition \eqref{tame} is trivially satisfied. Therefore, the composition $\pi \circ F$ is tame.
	\end{proof}
	
	\vspace{0.2cm}
	
	\begin{corollary}[\cite{CT23}, Theorem 3.2]
		Let $F:(\mathbb{R}^M, 0) \to (\mathbb{R}^N,0)$ and $G:(\mathbb{R}^N, 0) \to (\mathbb{R}^K,0), M\ge N \ge K \ge 2$ be analytic map germs such that F is tame and $\Disc F=\{0\}$, and that G has an isolated singular point at the origin. Then $H = G\circ F$ is tame and has a local tube fibration.
	\end{corollary}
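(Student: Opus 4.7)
The plan is to deduce this corollary almost immediately from Theorem \ref{eqHtame} together with Theorem \ref{ttf}, by observing that the hypothesis $\Sing G = \{0\}$ trivializes Condition \eqref{tame}.

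First I would check that the hypotheses of Theorem \ref{eqHtame} are in place. The tameness of $F$ and $\Disc F = \{0\}$ are given. Since $G$ has an isolated singular point at the origin, $\Sing G = \{0\}$, and in particular $\{0\} \subseteq \Sing G$, so $G$ satisfies the hypothesis imposed on it by Theorem \ref{eqHtame}.

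Next I would verify Condition \eqref{tame}. Because $\Sing G = \{0\}$, one has the trivial inclusion
\[
\overline{F(M(H)\setminus \Sing H)}\cap \Sing G \subseteq \Sing G = \{0\},
\]
and so Condition \eqref{tame} is automatically satisfied. Applying Theorem \ref{eqHtame} then yields that $H = G\circ F$ is tame.

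Finally, to conclude the existence of a local Milnor tube fibration, I would invoke Theorem \ref{ttf}, which requires both the tameness of $H$ (just established) and $\Disc H = \{0\}$. The latter follows from the discussion around inclusion \eqref{singco}: since $\Sing G = \{0\} \subseteq V_G$ implies $\Disc G = G(\{0\}) = \{0\}$, and $\Disc F = \{0\}$ by hypothesis, we conclude $\Disc H = \{0\}$. Thus Theorem \ref{ttf} provides the Milnor tube fibration for $H$. There is no substantive obstacle in the proof; the whole argument amounts to recognizing that the isolated-singularity hypothesis on $G$ is precisely what makes the tameness criterion from Theorem \ref{eqHtame} vacuous.
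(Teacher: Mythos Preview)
Your proof is correct and follows exactly the paper's approach: the paper's own one-line proof simply observes that $\Sing G \subseteq \{0\}$ makes Condition~\eqref{tame} trivially satisfied, so Theorem~\ref{eqHtame} applies. You have merely spelled out the additional verification that $\Disc H = \{0\}$ in order to invoke Theorem~\ref{ttf} for the tube fibration, which the paper leaves implicit.
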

	
	\begin{proof}
		Since $\Sing G\subseteq \{0\}$, Condition \eqref{eqHtame} is trivially satisfied.
	\end{proof}
	
	\vspace{0.2cm}
	
	Next, we provide an example satisfying the hypothesis of Theorem \eqref{eqHtame} but for a $G$ with $\dim \Sing G >0.$ 
	
	\begin{example}
		Consider the map germs $F:\mathbb{R}^{4} \rightarrow \mathbb{R}^{3}$ and $G:\mathbb{R}^{3}\rightarrow \mathbb{R}^{2}$ given by $F(x,y,z,w)=(x,y,z(x^{2}+y^{2}+z^{2}+w^{2}))$ and $G(u,v,t)=(ut,vt)$. Consequently, we have $H(x,y,z,w)=(xz(x^{2}+y^{2}+z^{2}+w^{2}),yz(x^{2}+y^{2}+z^{2}+w^{2}))$. One can easily show that $F$, $G$ are tame, $\Disc F=\{0\},\Disc G=\{0\},\Sing G=\{t=0\}$,  $\Sing H =\{z=0\}$, and  $M(H)=\{z=0\}\cup\{w=0,z^{2}=x^{2}+y^{2}\}$. Furthermore, one can show $\overline{F(M(H)\m \Sing H)}=\{t^2 = 4(u^2+v^2)^3\}$ Therefore, we have that 
		
		\begin{center}
			$\overline{F(M(H)\m \Sing H)}\cap \Sing G= \{0\}.$
		\end{center}     
		
		By Theorem \ref{eqHtame}, $H$ is tame.
		
	\end{example}
	
	The next example shows that the assumption that $F$ is tame is necessary; Condition \eqref{tame} is sharp in the sense that there are counterexamples if $F$ is not tame.

	\begin{example}\label{fnotame}
		Let $F:(\bR^{4},0) \to (\bR^{3},0), F(x,y,z,w)=(x^2+y^2,z(x^2+y^2),w(x^2+y^2))$ and $G:(\bR^{3},0) \to (\bR^{2},0), G(u,v,t)=(u(u^2+v^2+t^2),v).$ It follows that $\Sing F \subseteq V_{F}=\{x=y=0\}$ but $F$ is not tame since $F^{-1}(-\delta,0,0)$ is empty, for any $\delta>0.$ On the other hand, $\Sing G =\{0\}$ and Condition \eqref{tame} is satisfied. However, by the argument for $F$, the composition $H=G\circ F$ is not tame.
		
	\end{example}
	
	The next example shows that the composite map germ $H=G\circ F$ can be tame even if the map germ $G$ is not tame.
	
	\begin{example} \label{exdan} Let $F:(\bR^{4},0) \to (\bR^{3},0), F(x,y,z,w)=(\dfrac{1}{3}xw,yw,zw)$, $G:(\bR^{3},0) \to (\bR^{2},0), G(u,v,t)=(ut,vt(9u^2+v^2+t^2)$ and $H(x,y,z,w)= G \circ F = (\dfrac{1}{3}w^2xz,w^4yz(x^2+y^2+z^2))$.  It is straightforward to show that $F$ is tame and $\Disc F = \{0\}$, while $\Sing G = \{t=0\}$ and $M(G) = \{t=0\} \cup \{t^4 + 8 t^2 u^2 - 9 u^4 + 6 u^2 v^2 - v^4\}$. Thus
		$$
		\overline{M(G)\setminus \Sing G} \cap \Sing G = \{t=0, v^2 = 3 u^2\}
		$$
		and hence $G$ is not tame.  It is a more difficult computation, but it can be shown that $\Sing H = \{w=0\} \cup \{z=0\}$ and $M(H) = \{w=0\} \cup \{z=0\} \cup \{z^2 = x^2 + y^2, w^2 = 2 z^2\}$.  From this it is clear that $\overline{M(H) \setminus \Sing H} \cap \Sing H = \{0\}$, and therefore $H$ is tame.  
		
	\end{example}
	
	The next example shows that the composite map germ $H=G\circ F$ can be tame even if the map germ $F$ is not tame.
	
	\begin{example} \label{exdan2} Let $F:(\bR^{5},0) \to (\bR^{4},0), F(x,y,z,w, k)=(x, y, z, x w)$, $G:(\bR^{4},0) \to (\bR^{2},0), G(u,v,t, s)=(ut, vt)$ and $H(x,y,z,w, k)= G \circ F = (xz, yz)$.  It is easy to show that $F$ is not tame, $G$ is tame, and $H$ is tame.   
		
	\end{example}
	
	\begin{remark}
		We now have examples (\ref{nottame}, \ref{exdan}, and \ref{exdan2}) showing that it is possible for two of the functions $F$, $G$, and $H$ to be tame, but the third is not.  By combining Examples \ref{exdan} and \ref{exdan2}, we can find two functions that are not tame but their composition is tame.  For example, neither $F(x, y,z w) = (\frac{1}{3} x w, y w, z w, w)$ nor $G(u, v, t, s) = (u t, v t (9 u^2 + v^2 + t^2)$ are tame, but their composition $H = G \circ F$ is tame.      
	\end{remark}
	
	\begin{remark}
		\label{rem:nottame}
		As mentioned above, when we say that a function is tame, we mean that the function is tame with respect to $\rho_{E}(x)=\|x\|^{2}$.  In particular, the non-tame functions in Examples \ref{nottame} and \ref{exdan} are both tame with respect to a different choice of $\rho$ ($\rho = x^2+y^2+z^4+w^2$ for Example \ref{nottame} and $\rho = 9u^2+v^2+t^2$ for Example \ref{exdan}).   
	\end{remark}
	
	To finish this section, we introduce a sufficient condition for the tameness of the composition $H=G\circ F$ under the hypothesis that $F$ and $G$ are both tame.
	
	\begin{proposition} \label{incl}
		Let $F:(\mathbb{R}^M, 0) \to (\mathbb{R}^N,0)$ and $G:(\mathbb{R}^N, 0) \to (\mathbb{R}^K,0), M\ge N \ge K \ge 2$, be analytic map germs, with $F$ tame and $\Disc F=\{0\}$. Suppose further that $F(M(H))\subseteq M(G)$ as a germ of sets. Then if $G$ is tame, so is the composition $H=G\circ F$.    
	\end{proposition}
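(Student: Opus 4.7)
The plan is to invoke the characterization of tameness in Theorem~\ref{eqHtame}: it suffices to verify Condition~\eqref{tame}, that is, $\overline{F(M(H)\setminus\Sing H)}\cap\Sing G\subseteq\{0\}$. The hypothesis $F(M(H))\subseteq M(G)$ is tailor-made for this, as it allows me to transfer a statement about $H$ into one about $G$, where tameness of $G$ can then be applied directly.

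Concretely, I will first use Remark~\ref{clo} to rewrite $\overline{F(M(H)\setminus\Sing H)}=\overline{F(M(H))\setminus\Sing G}$. Given a point $y$ in the intersection of this set with $\Sing G$, I can choose a sequence $y_n\in F(M(H))\setminus\Sing G$ with $y_n\to y$; the inclusion $F(M(H))\subseteq M(G)$ then forces $y_n\in M(G)\setminus\Sing G$, and so $y\in\overline{M(G)\setminus\Sing G}\cap\Sing G$. Using the reinterpretation derived from Lemma~\ref{reg}, this set coincides with $\overline{M(G)\setminus V_G}\cap V_G$, which is contained in $\{0\}$ by tameness of $G$. Hence $y=0$, Condition~\eqref{tame} holds, and Theorem~\ref{eqHtame} yields that $H$ is tame.

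The main obstacle I anticipate is the bookkeeping behind the equality $\overline{M(G)\setminus\Sing G}\cap\Sing G=\overline{M(G)\setminus V_G}\cap V_G$, which hinges on the inclusion $\Sing G\subseteq V_G$ (equivalently, $\Disc G=\{0\}$); this is implicit in the Milnor-fibration setting of the paper, and in turn, via \eqref{singco}, yields $\Disc H=\{0\}$ so that Theorem~\ref{eqHtame} is applicable. Should one wish to avoid this implicit hypothesis, essentially the same argument can be run directly from the definition of tameness: take $x_n\in M(H)\setminus V_H$ with $x_n\to x\in V_H$, push forward via $F$ to obtain $y_n\in M(G)\setminus V_G$ converging to $F(x)\in V_G$ (whence $F(x)=0$ by tameness of $G$), and then promote $\{x_n\}$ to a sequence in $M(F)\setminus V_F$ using Lemma~\ref{l1}, so that tameness of $F$ delivers $x=0$.
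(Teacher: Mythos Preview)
Your proposal is correct and follows essentially the same route as the paper: verify Condition~\eqref{tame} via Remark~\ref{clo} and the chain of inclusions $\overline{F(M(H)\setminus\Sing H)}\cap\Sing G\subseteq\overline{F(M(H))\setminus\Sing G}\cap\Sing G\subseteq\overline{M(G)\setminus\Sing G}\cap\Sing G\subseteq\{0\}$, then invoke Theorem~\ref{eqHtame}. Your extra care in justifying $\overline{M(G)\setminus\Sing G}\cap\Sing G=\overline{M(G)\setminus V_G}\cap V_G$ (which needs $\Disc G=\{0\}$, an assumption the paper leaves implicit) and your alternative direct argument avoiding Theorem~\ref{eqHtame} are both sound additions.
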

	
	\begin{proof}
		From the inclusion $F(M(H))\subseteq M(G)$, it follows that $\overline{F(M)\m \Sing G} \subseteq \overline{M(G) \m \Sing G}$.  Now applying Remark \ref{clo}, one can see that the following inclusions hold true:  
		
		\vspace{0.2cm}
		
		\begin{eqnarray*}    
			\overline{F(M(H)\setminus \Sing H)}\cap \Sing G & \subseteq & \overline{F(M(H))\setminus \Sing G}\cap \Sing G \\ &\subseteq & \overline{M(G) \m \Sing G} \cap \Sing G \\ & \subseteq& \{0\}
		\end{eqnarray*}
		where in the last inclusion used the tameness of $G$. 
		
		\vspace{0.2cm}
		
		Therefore, the inclusion $\overline{F(M(H)\setminus \Sing H)}\cap \Sing G \subseteq \{0\}$ holds and by Theorem~\ref{eqHtame}, $H$ is tame. 
	\end{proof}
	
	\vspace{0.2cm}

	We can extend Proposition \ref{incl} by replacing the set inclusion with an equality.
	
	\begin{corollary}\label{rec}
		Let $F:(\mathbb{R}^M, 0) \to (\mathbb{R}^N,0)$ and $G:(\mathbb{R}^N, 0) \to (\mathbb{R}^K,0), M\ge N \ge K \ge 2$, be analytic map germs, with $F$ tame and $\Disc F=\{0\}.$ Suppose that $F(M(H))=M(G)$ as a germ of set at the origin. Then $G$ is tame if and only if the composition $H=G\circ F$ is tame.    
	\end{corollary}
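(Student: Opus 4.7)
The plan is to handle the two directions separately, relying on Proposition \ref{incl} for the forward direction and on Theorem \ref{eqHtame} combined with Remark \ref{clo} for the converse. The forward implication ``$G$ tame implies $H$ tame'' is immediate from Proposition \ref{incl}, since the assumed equality $F(M(H))=M(G)$ in particular yields the inclusion $F(M(H))\subseteq M(G)$ required there.

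For the converse ``$H$ tame implies $G$ tame'', I first reduce to the case $\{0\}\subseteq \Sing G$: otherwise $G$ is a submersion at the origin, so $\Sing G = \emptyset$ as a germ at $0$ and $G$ is tame trivially. With $\{0\}\subseteq \Sing G$ in force, Lemma \ref{l0} gives $\Sing H = F^{-1}(\Sing G)$. Combining this with $F(M(H))=M(G)$, I will verify the set identity
\[
M(G)\setminus \Sing G \;=\; F\bigl(M(H)\setminus \Sing H\bigr),
\]
by a direct set-theoretic manipulation: a point $y=F(x)$ with $x\in M(H)$ lies outside $\Sing G$ exactly when $x\notin F^{-1}(\Sing G)=\Sing H$.

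Taking closures and applying Remark \ref{clo} then gives $\overline{M(G)\setminus \Sing G} = F\bigl(\overline{M(H)\setminus \Sing H}\bigr)$. For any germ point $y\in \overline{M(G)\setminus \Sing G}\cap \Sing G$, this produces a preimage $x\in \overline{M(H)\setminus \Sing H}$; the condition $F(x)=y\in \Sing G$ together with Lemma \ref{l0} places $x\in \Sing H$, and the tameness of $H$ then forces $x=0$, so that $y=F(0)=0$. Hence $\overline{M(G)\setminus \Sing G}\cap \Sing G\subseteq\{0\}$ as germs at the origin, i.e.\ $G$ is tame.

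The main technical point is the set identity $M(G)\setminus \Sing G = F(M(H)\setminus \Sing H)$: this is precisely where the equality $F(M(H))=M(G)$ is needed (as opposed to mere inclusion), since one must be able to lift points in $M(G)\setminus \Sing G$ back to $\overline{M(H)\setminus \Sing H}$ in order to transfer tameness from $H$ to $G$. Everything else is a routine combination of Lemma \ref{l0}, Remark \ref{clo}, and the tameness of $H$, so no serious obstacle is expected.
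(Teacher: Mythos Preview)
Your proof is correct and follows essentially the same route as the paper. The paper's argument for the converse is slightly more compressed: it writes
\[
\overline{M(G)\setminus \Sing G}\cap \Sing G \;\subseteq\; \overline{F(M(H))\setminus \Sing G}\cap \Sing G \;\subseteq\; \{0\},
\]
using only the inclusion $M(G)\subseteq F(M(H))$ for the first step and then invoking Remark~\ref{clo} together with Theorem~\ref{eqHtame} (the ``only if'' direction) for the second. You instead establish the full equality $M(G)\setminus \Sing G = F(M(H)\setminus \Sing H)$ and apply the definition of tameness of $H$ directly, which is the same content unpacked. Your explicit treatment of the case $\{0\}\not\subseteq \Sing G$ is a point the paper leaves implicit.
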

	
	\begin{proof}
		In light of Proposition \ref{incl} it suffices to prove that if
		$M(G)\subseteq F(M(H))$, then $H$ tame implies $G$ tame. By our assumptions we have $$\overline{M(G) \m \Sing G}\cap \Sing G \subseteq \overline{F(M(H))\setminus \Sing G}\cap \Sing G \subseteq \{0\}$$
		where the last inclusion follows from the tameness of $H$. 
	\end{proof}
	
	\begin{remark} A summary of all relationship between tameness of $F, G$ and the composition $H$ is as follows:
		
		\begin{itemize}
			\item [(1)] Example \eqref{nottame} shows that $F$ and $G$ tame $\not\Rightarrow$ $H$ tame. Proposition \ref{incl} provides a sufficient condition for the implication to be true;
			
			\item [(2)] Example \eqref{exdan} shows that $F$ and $H$ tame $\not\Rightarrow $ $G$ tame. Corollary \ref{rec} provides a sufficient condition for the implication to be true;
			
			\item [(3)] Example \eqref{exdan2} shows that $G$ and $H$ tame $\not\Rightarrow $ $F$ tame. 
		\end{itemize}    
	\end{remark}
	

	\section{Applications}
	
	For $M\geq N \geq K \geq 2$, consider the following diagrams: 
	
	$$\begin{tikzcd}
	(\bR^{M},0)\rar{F}\arrow[bend right]{rr}[black,swap]{H=G\circ F}  & (\bR^{N},0) \rar{G}  & (\bR^{K},0)
	\end{tikzcd}$$
	
	and
	
	\begin{center}
		\begin{equation}\label{d2}
		\begin{tikzcd}[column sep=small]
		\bar{B}^M_{\epsilon}\cap H^{-1}(\bar{B}_{\eta}^{K}-\{0\}) \arrow{r}{H_{|}}  \arrow{rd}{\rho_{K}\circ H} 
		& \bar{B}_{\eta}^{K} - \{0\} \arrow{d}{\rho_{K}(z):=\|z\|^{2}} \\
		& (0,\eta^{2}]
		\end{tikzcd}
		\end{equation}
	\end{center}
	
	\vspace{0.2cm}
	
	
	We may consider $\eta>0$ small enough such that $\rho_{K}\circ H$ is a proper, smooth submersion function, thus it is a smooth trivial fibration. Therefore, the following diffeomorphism holds true:

	\begin{equation}
	\bar{B}^M_{\epsilon}\cap H^{-1}(\bar{B}_{\eta}^{K}-\{0\}) \cong \bar{B}^M_{\epsilon}\cap H^{-1}(S_{\eta}^{K-1}) \times (0,\eta^{2}]
	\end{equation}
	
	\vspace{0.2cm}
	
	On the other hand, the following commutative diagram 
	
	\vspace{0.2cm}
	
	\begin{equation}\label{d3}
	\begin{tikzcd}
	\bar{B}^M_{\epsilon}\cap H^{-1}(\bar{B}_{\eta}^{K}-\{0\}) \arrow[r, "\cong"] \arrow[d, "H_{|}"]
	& \bar{B}^M_{\epsilon}\cap H^{-1}(S_{\eta}^{K-1}) \times (0,\eta^{2}]
	\arrow[d, "\Psi"] \\
	\bar{B}_{\eta}^{K}-\{0\} \arrow[r, "\cong" blue]
	&  S_{\eta}^{K-1}\times (0,\eta^{2}] 
	\end{tikzcd}
	\end{equation}
	where $\Psi=(H_{|},\rm{Id})$, says that the study of the Milnor tube fibration \eqref{tmil} may be reduced, up to homotopy, to the Milnor fibration of Corollary \ref{ttf2}.
	
	\vspace{0.2cm}
	
	Now consider $F$ and $G$ with $\Disc F=\{0\}$ and $\Disc G=\{0\}$ then $\Disc H=\{0\}$. Suppose further that $F$, $G$, and $H$ are tame. Then by Thereom \ref{ttf}, we can find $\epsilon_{0}>0$ small enough such that for all $0<\epsilon \leq \epsilon_{0}$ there exist $\eta$ and $\tau$ with $0<\eta \ll \tau \ll \epsilon $ for which the three Milnor tubes exist:
	
	$$H_{|}:\bar{B}^M_{\epsilon}\cap H^{-1}(\bar{B}_{\eta}^{K}-\{0\})\to \bar{B}_{\eta}^{K}-\{0\}$$

	$$G_{|}:\bar{B}^N_{\tau}\cap G^{-1}(\bar{B}_{\eta}^{K}-\{0\})\to \bar{B}_{\eta}^{K}-\{0\}$$

	$$F_{|}:\bar{B}^M_{\epsilon}\cap F^{-1}(\bar{B}_{\tau}^{N}-\{0\})\to \bar{B}_{\tau}^{N}-\{0\}$$
	
	Denote by $F_{F}, F_{G}$ and $F_{H}$ the respective Milnor fibers induced by fibration projections $F_{|},G_{|}$ and $H_{|}.$
	
	\vspace{0.2cm}
	
	One can choose any $z \in \bar{B}_{\eta}^{N}-\{0\}$ and then $F_{H}=F^{-1}(G^{-1}(z))=F^{-1}(F_{G}).$ Thus the Milnor tube fibration  $$F_{|}:\bar{B}^M_{\epsilon}\cap F^{-1}(\bar{B}_{\eta}^{N}-\{0\})\to \bar{B}_{\eta}^{N}-\{0\},$$ restricts to the sequence of map   
	
	\begin{equation}\label{dia1}
	F_F \hookrightarrow F_H  \xlongrightarrow{\text{$F_{|}$}} F_G.
	\end{equation}
	
	which is also a smooth locally trivial fibration with fiber $F_{F}.$
	
	\vspace{0.3cm}
	
	Therefore, if $ F_G $ is contractible, then one gets the following diffeomorphism of the Milnor's fibers $ F_H \cong F_F \times F_G.$ 
	
	\vspace{0.2cm}
	
	In particular, it provides an easy proof for a conjecture stated by J. Milnor in \cite{Mi}, p. 100, which we remind of below for completeness. We should point out that Milnor's conjecture was stated for $F$ with $\Sing F=\{0\}$, but below we will use the more general case of $\Disc F=\{0\}$, as stated and proved in \cite{DA}, using completely different tools. { See also \cite[Corollary 3.3, p. 6]{CT23} for a recent proof.}
	
	\begin{corollary}[Milnor's conjecture, \cite{Mi}{~p. 100}]
		Let $F:(\mathbb{R}^M, 0) \to (\mathbb{R}^N,0)$, $ M\ge N \ge 3 $, be analytic map germs such that $F$ is tame and $\Disc F = \{0\}$. If $\pi:(\mathbb{R}^N, 0) \to (\mathbb{R}^{N-1},0)$ is the canonical projection, then the fiber $ F_{\pi \circ F}$ is homeomorphic to $F_{F}\times [-1,1].$ 
	\end{corollary}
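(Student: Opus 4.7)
The plan is to apply the fibration sequence \eqref{dia1} with $G$ taken to be the canonical projection $\pi$, and observe that its Milnor fiber is a closed interval, hence contractible, which forces the total space to split as a product.

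First I would verify that $F$, $G=\pi$, and $H=\pi\circ F$ all satisfy the hypotheses that produce the three Milnor tube fibrations used in the derivation of \eqref{dia1}. Since $\pi$ is a submersion we have $\Sing \pi=\emptyset$, so trivially $\Disc \pi=\{0\}$ and $\pi$ is tame. The earlier corollary (the one following Theorem~\ref{eqHtame}) then gives that $H=\pi\circ F$ is tame with $\Disc H=\{0\}$. Thus, by Theorem~\ref{ttf}, for $\epsilon_0>0$ small enough and $0<\eta\ll\tau\ll\epsilon\le\epsilon_0$, all three Milnor tube fibrations for $F$, $\pi$, and $H$ exist, and the restriction of the Milnor tube fibration of $F$ to $H^{-1}$ of a regular fiber yields exactly \eqref{dia1}:
\[
F_F \hookrightarrow F_H \xrightarrow{F_{|}} F_\pi.
\]

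Next I would identify $F_\pi$ explicitly. Writing $\pi(y_1,\dots,y_N)=(y_1,\dots,y_{N-1})$ and picking any $z=(z_1,\dots,z_{N-1})\in \bar{B}^{N-1}_\eta\setminus\{0\}$, the Milnor fiber is
\[
F_\pi = \bar{B}^N_\tau \cap \pi^{-1}(z) = \{(z_1,\dots,z_{N-1},t) : t^2 \le \tau^2 - \|z\|^2\},
\]
which is a nonempty closed line segment since $\|z\|<\eta\ll\tau$. Hence $F_\pi$ is diffeomorphic to $[-1,1]$, and in particular it is contractible.

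Finally I would invoke the standard fact that a smooth locally trivial fibration over a contractible, paracompact base is trivial. Applied to \eqref{dia1} this yields a diffeomorphism $F_H\cong F_F\times F_\pi\cong F_F\times [-1,1]$, which is the desired homeomorphism. I do not anticipate a serious obstacle: the only substantive input is the tameness of $H=\pi\circ F$, which was already supplied by the earlier corollary, and the computation of $F_\pi$; the product decomposition is then automatic from the triviality of a fibration over a contractible base.
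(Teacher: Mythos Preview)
Your proposal is correct and follows essentially the same line as the paper: invoke the tameness of $\pi\circ F$ from the earlier corollary, apply the fibration sequence \eqref{dia1}, identify $F_\pi$ as a closed interval (hence contractible), and conclude the product splitting $F_H\cong F_F\times F_\pi$. The paper states this more tersely, but the argument is the same.
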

	
	\begin{remark}
		
		\begin{enumerate}
			
			\item[(1)] As a direct consequence of the fibration \eqref{dia1}, one gets a multiplicative formula for the Euler characteristic of the fibers $\chi(F_{H})=\chi(F_{F})\chi(F_{G})$. A proof for this formula using spectral sequences may be found in the book of E. H. Spanier \cite[Section 3, p. 481]{Sp}, or the reader may consult the reference \cite{KMR} for another proof. 
			
			\vspace{0.2cm}
			
			\item[(2)] Since $H:(\bR^{M},0)\to (\bR^{K},0)$ admits a Milnor tube fibration 
			
			$$H_{|}:\bar{B}^M_{\epsilon}\cap H^{-1}(\bar{B}_{\eta}^{K}-\{0\})\to \bar{B}_{\eta}^{K}-\{0\},$$ 
			it follows from diagram \eqref{d3} that up to homotopy, one can restrict the map to the fibration  
			$$F_H \hookrightarrow \bar{B}^M_{\epsilon}\cap H^{-1}(S_{\eta}^{K-1})  \xlongrightarrow{\text{$H_{|}$}} S_{\eta}^{K-1}.$$
			
			\vspace{0.2cm}
			
			Therefore, $\chi(\bar{B}^M_{\epsilon}\cap H^{-1}(S_{\eta}^{K-1}))=\chi(S_{\eta}^{K-1})\chi(F_{H}).$
			
			\vspace{0.2cm}
			
			\item[(3)] Denote by $T_{H}:=\bar{B}^M_{\epsilon}\cap H^{-1}(S_{\eta}^{K-1})$ the Milnor tube of $H$, and by $T_{G}$ and $T_{F}$ the Milnor tubes of $G$ and $F$, respectively. Under the condition that $F,G$ and $H$ admit Milnor tube fibrations, it follows from the items $(1)$ and $(2)$ above that
			
			$$\chi(T_{H})=\chi(S_{\eta}^{K-1})\chi(F_{H})=\underbrace{\chi(S_{\eta}^{K-1})\chi(F_{G})}_{=\chi(T_{G})}\chi(F_{F})=\chi(T_{G})\chi(F_{F}).$$

		\end{enumerate}

	\end{remark}
	
	\subsection{The isolated singular points cases} 
	
	In \cite{ADD} the authors proved a special formula for the Euler characteristic of the Milnor fiber for an isolated singularity map germ as follows.
	
	\begin{theorem} \cite{ADD}
		Let $F: (\bR^{M},0) \to (\bR^N,0), M > N \geq 2$, be a polynomial map germ with an isolated singularity at the origin, and consider
		$F(x)=(F_{1}(x),F_{2}(x),\ldots, F_{N}(x))$,
		an arbitrary representative of the germ. Denote by $\deg_{0}(\nabla F_{i}(x))$, for $i=1, \ldots, N$ the topological degree of the map $\epsilon \dfrac{\nabla F_{i}}{\|\nabla F_{i}\|}: S_{\epsilon}^{M-1}\to S_{\epsilon}^{M-1}$, for $\epsilon >0$ small enough.
		
		\vspace{0.2cm}
		
		\begin{enumerate}
			\item [(i)] If $M$ is even, then $\chi (F_{F}) = 1- \deg_{0} \nabla F_{1}.$ Moreover, we have  $$\deg_{0} \nabla F_{1}=\deg_{0} \nabla F_{2}=\cdots = \deg_{0} \nabla F_{N}.$$
			
			\item [(ii)] If $M$ is odd, then $\chi(F_{F}) = 1.$ Moreover, we have $\deg_{0} \nabla F_{i}=0$ for $i=1,2,\ldots, N.$
		\end{enumerate}
	\end{theorem}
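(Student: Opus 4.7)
The plan is to reduce this statement to the classical one-variable Khimshiashvili formula by using the product-fibration structure \eqref{dia1} to compare $\chi(F_F)$ with the Milnor fiber of each coordinate function $F_i=\pi_i\circ F$. First I would verify that each $F_i$ itself has an isolated critical point at the origin: if $\Sing F=\{0\}$, then $\d F$ has maximal rank $N$ off the origin, its transpose $\d F^{T}\colon\bR^{N}\to\bR^{M}$ is injective there, and hence $\nabla F_i=\d F^{T}e_i\neq 0$ off the origin. Consequently $\deg_{0}\nabla F_i$ is well defined, and because $\pi_i$ is a submersion Condition \eqref{tame} is vacuous, so Theorem \ref{eqHtame} and the corollary following it guarantee that $F_i$ is tame with $\Disc F_i=\{0\}$ and therefore admits a Milnor tube fibration.

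Next I would apply the fibration sequence \eqref{dia1} with $G=\pi_i$ and $H=F_i$. The Milnor fiber $F_{\pi_i}$ is a closed disk $D^{N-1}$, hence contractible, so the fibration exhibits the Milnor fiber of $F_i$ over any point of $\bar B_{\eta}^{1}\m\{0\}$ as $F_F\times D^{N-1}$ up to homotopy. Since the two connected components of $\bar B_{\eta}^{1}\m\{0\}$ produce the ``positive'' and ``negative'' Milnor fibers $F_i^{-1}(+\delta)\cap\bar B_\epsilon^{M}$ and $F_i^{-1}(-\delta)\cap\bar B_\epsilon^{M}$, and both are homotopy equivalent to $F_F$, we obtain
\[
\chi\bigl(F_i^{-1}(\pm\delta)\cap\bar B_\epsilon^{M}\bigr)=\chi(F_F).
\]
The classical Khimshiashvili formula applied to the isolated-singularity function $F_i$ gives
\[
\chi\bigl(F_i^{-1}(\delta)\cap\bar B_\epsilon^{M}\bigr)=1-\operatorname{sign}(\delta)^{M}\deg_{0}\nabla F_i
\]
for $0<|\delta|\ll\epsilon\ll 1$.

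Combining the two identities settles both parts. When $M$ is even the sign factor is trivial, so $\chi(F_F)=1-\deg_{0}\nabla F_i$ for every $i$; because the left-hand side does not depend on $i$, all the degrees $\deg_{0}\nabla F_i$ coincide. When $M$ is odd the choices $\delta>0$ and $\delta<0$ force
\[
1-\deg_{0}\nabla F_i=\chi(F_F)=1+\deg_{0}\nabla F_i,
\]
so $\deg_{0}\nabla F_i=0$ for every $i$ and consequently $\chi(F_F)=1$. The main technical obstacle I foresee is not the parity bookkeeping but the careful justification that Khimshiashvili's formula genuinely applies to the composite $F_i=\pi_i\circ F$: this rests on the gradient nonvanishing argument above (which makes $F_i$ an isolated-singularity function) together with the fact from Theorem \ref{eqHtame} that $F_i$ is tame with $\Disc F_i=\{0\}$, which is precisely the input needed to run the standard Morse-theoretic count of indices of $\nabla F_i$ inside $\bar B_\epsilon^{M}$.
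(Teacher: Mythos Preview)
The paper itself does not prove this theorem; it is quoted from \cite{ADD} and used only as input for the subsequent corollary. Your argument is essentially the one used in \cite{ADD}: reduce to the single-function case via the product structure $F_{F_i}\simeq F_F\times D^{N-1}$ and then invoke Khimshiashvili's formula for each coordinate function $F_i$. So in spirit the approach matches the original source.

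One small correction is worth making. You appeal to Theorem \ref{eqHtame} to conclude that $F_i=\pi_i\circ F$ is tame, but that theorem requires both $K\ge 2$ and $\{0\}\subseteq\Sing G$, and neither holds for the coordinate projection $\pi_i\colon(\bR^N,0)\to(\bR,0)$. The conclusion you want is still true, and in fact easier: since you have already shown that $\nabla F_i\neq 0$ off the origin, $F_i$ has an isolated critical point and is therefore automatically tame (any ICIS germ is tame, as noted after Definition~2.4). Alternatively, Proposition~\ref{p1}(i) gives $M(F_i)\subseteq M(F)$ directly because $\pi_i$ is a submersion, and then $\overline{M(F_i)\setminus V_{F_i}}\cap V_{F_i}\subseteq \overline{M(F)\setminus V_F}\cap V_F\subseteq\{0\}$. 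Either route avoids the inapplicable hypothesis of Theorem~\ref{eqHtame}; the rest of your argument goes through unchanged.
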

	
	We can write the two formulas as $\chi(F_{F})=1-\dfrac{1}{2}\chi(S^{M})\deg_{0} \nabla F_{1},$ where $S^{M}$ is the unit $M$-dimensional sphere.
	
	\vspace{0.2cm}
	
	If we consider the composition map germ $H=G\circ F$ in the special cases of $\Sing F=\{0\}$ and $\Sing G=\{0\}$, then $\Disc H=\{0\}$ and $F$, $G$ and $H$ are tame. Hence, the Euler characteristic of $F_{H}$ admits a topological degree formula as follows. 
	
	\begin{corollary} Let $F: (\bR^{M},0) \to (\bR^N,0)$ and
		$G: (\bR^{N},0) \to (\bR^K,0), M > N > K \geq 2$, be polynomials map germs, with $\Sing F=\{0\}$ and $\Sing G=\{0\}.$ Denote its representatives by $F(x)=(F_{1}(x),F_{2}(x),\ldots, F_{N}(x)),$ and $G(x)=(G_{1}(x),G_{2}(x),\ldots, G_{K}(x)),$ and consider the composition map germ $H=G\circ F.$ Then, their respective Milnor fibers are related as $$\chi(F_{H})= 1-\dfrac{1}{2}\chi(S^{M})\deg_{0} \nabla F_{1} -\dfrac{1}{2}\chi(S^{N})\deg_{0} \nabla G_{1} +\dfrac{1}{4}\chi(S^{M})\chi(S^{N})\deg_{0} \nabla F_{1}\deg_{0} \nabla G_{1}.$$ 
		
	\end{corollary}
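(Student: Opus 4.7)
The plan is to reduce the statement to two ingredients already available in the paper: the multiplicativity of the Euler characteristic along the fibration \eqref{dia1}, and the topological degree formula of \cite{ADD} applied separately to $F$ and $G$.

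First I would verify the hypotheses needed to invoke the machinery of Section~4 and the fibration \eqref{dia1}. Since $\Sing F = \{0\}$ we have, in particular, $\Sing F \subseteq V_F$, hence $\Disc F = \{0\}$, and $F$ is trivially tame (its Milnor set away from $V_F$ stays off the origin). The same reasoning gives $\Disc G = \{0\}$ and tameness for $G$. By Lemma~\ref{l0} (or directly from \eqref{singco}), $\Disc H = \{0\}$, and since $\Sing G \subseteq \{0\}$, Condition~\eqref{tame} is vacuously satisfied, so Theorem~\ref{eqHtame} guarantees that $H$ is tame and admits a Milnor tube fibration. Consequently all three Milnor fibers $F_F$, $F_G$, $F_H$ are well-defined and the sequence
\[
F_F \hookrightarrow F_H \xrightarrow{\,F_{|}\,} F_G
\]
is a locally trivial smooth fibration.

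Second, I would apply item~(1) of the Remark following diagram \eqref{dia1}, which gives the multiplicative formula
\[
\chi(F_H) \;=\; \chi(F_F)\,\chi(F_G).
\]
Then I plug in the formulas from the Theorem of \cite{ADD} recalled just above the corollary, uniformly written as
\[
\chi(F_F) \;=\; 1 - \tfrac{1}{2}\chi(S^M)\,\deg_0 \nabla F_1, \qquad
\chi(F_G) \;=\; 1 - \tfrac{1}{2}\chi(S^N)\,\deg_0 \nabla G_1,
\]
(valid for $F$ and $G$ precisely because both have isolated singularity at the origin). Expanding the product
\[
\Bigl(1 - \tfrac{1}{2}\chi(S^M)\,\deg_0 \nabla F_1\Bigr)\Bigl(1 - \tfrac{1}{2}\chi(S^N)\,\deg_0 \nabla G_1\Bigr)
\]
yields exactly the four terms in the stated identity.

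There is no real obstacle beyond the bookkeeping; the only point requiring a small check is that the ambient dimensions are admissible for both instances of the \cite{ADD} theorem, i.e.\ $M>N$ and $N>K$, which is part of the hypothesis. In particular, one should note that the formula remains valid regardless of the parities of $M$ and $N$: in the odd cases the corresponding degree vanishes and $\chi(S^M)=0$ or $\chi(S^N)=0$, and the product formula specializes correctly. Thus the proof is essentially the concatenation of \eqref{dia1}, the Euler characteristic multiplicativity, and the degree formula of \cite{ADD}.
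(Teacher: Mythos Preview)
Your proposal is correct and follows exactly the route the paper intends: the corollary is obtained by combining the multiplicativity $\chi(F_H)=\chi(F_F)\chi(F_G)$ coming from the fibration \eqref{dia1} with the unified degree formula $\chi(F_F)=1-\tfrac{1}{2}\chi(S^M)\deg_0\nabla F_1$ (and its analogue for $G$) from \cite{ADD}, and then expanding the product. The only cosmetic remark is that your justification of tameness for $F$ could simply cite the ICIS observation made after Condition~\eqref{eq:main}, but this does not affect the argument.
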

	
	In particular, it says that if $\chi(F_{H})\neq 0$ then neither of the maps  $\epsilon \dfrac{\nabla F_{1}}{\|\nabla F_{1}\|}: S_{\epsilon}^{M-1}\to S_{\epsilon}^{M-1}$ nor $\epsilon \dfrac{\nabla G_{1}}{\|\nabla G_{1}\|}: S_{\epsilon}^{N-1}\to S_{\epsilon}^{N-1}$ can be homotopic to the identity maps.\footnote{It is enough to check that only for $M$ and $N$ even.}
	
	\vspace{0.2cm}
	
	
	
	
	
	
	
	\subsection{A factorization problem} Let us consider again the diagram of map germs:
	
	$$\begin{tikzcd}
	(\bR^{M},0)\rar{F}\arrow[bend right]{rr}[black,swap]{H=G\circ F}  & (\bR^{N},0) \rar{G}  & (\bR^{K},0)
	\end{tikzcd}$$
	
	We have seen that given $F$ and $G$ with $\Sing F=\{0\}$ and $\Sing G=\{0\}$, then $H=G\circ F$ has $\Disc H=\{0\}$ and is tame. It motivates the following result. 
	
	\begin{proposition} In the diagram above suppose that $\Disc F=\{0\},$ $\Disc H=\{0\},$ and that $G$ is $\mathcal{C}^{l}-K-$finitely determined for all $l\geq 0$. Then $\Sing G=\{0\}$ and hence $G$ is tame. Moreover, if further $F$ is tame then $H$ is tame as well.
	\end{proposition}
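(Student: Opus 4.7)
The plan is to reduce the proposition to the strong claim that $\Sing G \subseteq \{0\}$ as a germ at the origin. Once this is known, the tameness of $G$ is automatic (Condition~\eqref{eq:main} holds vacuously), and the tameness of $H$ under the additional assumption that $F$ is tame follows at once from Theorem~\ref{eqHtame}, since Condition~\eqref{tame} is trivially satisfied whenever $\Sing G\subseteq\{0\}$.

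To establish $\Sing G\subseteq\{0\}$: if $G$ is a submersion at $0$ there is nothing to prove, so I may assume $0\in\Sing G$. Lemma~\ref{l0} then gives $\Sing H = F^{-1}(\Sing G)$, and combining with $\Disc H=\{0\}$ (equivalently, $\Sing H\subseteq V_H = F^{-1}(V_G)$) yields $F^{-1}(\Sing G\setminus V_G)=\emptyset$ as germs at the origin. In particular, $\Sing G \cap F(U) \subseteq V_G$ for every sufficiently small neighborhood $U$ of $0$ in $\bR^M$. The $\mathcal{C}^l$-$K$-finite determinacy of $G$, which is equivalent to the complexification $G^{\bC}$ defining an isolated complete intersection singularity at the origin, gives $\Sing G \cap V_G \subseteq \{0\}$. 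Together these yield $\Sing G\cap F(U)\subseteq\{0\}$.

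The remaining step, which I expect to be the main technical obstacle, is to deduce $\Sing G\subseteq\{0\}$ from this inclusion. I would argue by contradiction: if some positive-dimensional analytic branch of $\Sing G\setminus V_G$ passed through $0$, the Curve Selection Lemma would produce an analytic arc $\gamma:[0,\delta)\to\Sing G\setminus V_G$ with $\gamma(0)=0$ and $\gamma(t)\neq 0$ for $t>0$. Since $\Sing F\subseteq V_F$, the restriction $F|_{U\setminus V_F}$ is a submersion and hence an open map into $\bR^N\setminus\{0\}$; the crucial closing argument is to use the analyticity of $F$ together with the finite $K$-determinacy of $G$ to rule out the possibility that such an arc $\gamma$ lies entirely in the closed complement $\bR^N\setminus F(U\setminus V_F)$, producing the desired contradiction. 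This density-type statement is the subtlest part of the proof, because $\Disc F=\{0\}$ alone does not force $F$ to cover a full neighborhood of the origin; the finite $K$-determinacy hypothesis on $G$ must be exploited in an essential way to restrict the possible location of $\Sing G\setminus V_G$ inside $\bR^N$.
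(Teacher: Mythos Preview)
Your overall strategy coincides with the paper's: reduce everything to the claim $\Sing G\subseteq\{0\}$, after which tameness of $G$ is automatic and Theorem~\ref{eqHtame} (with Condition~\eqref{tame} trivially verified) gives the tameness of $H$. The paper's argument for $\Sing G=\{0\}$ is a single line: it invokes the triangular diagram of Section~\ref{sec:composite}, which asserts that $\Disc F=\{0\}$ and $\Disc H=\{0\}$ (with $0\in\Sing G$) force $\Disc G=\{0\}$, i.e.\ $\Sing G\subseteq V_G$; combined with the characterization $\Sing G\cap V_G=\{0\}$ of $K$-finite determinacy, this yields $\Sing G=\{0\}$. Your use of Lemma~\ref{l0} is exactly the mechanism behind that diagram, so up to the point where you obtain $\Sing G\cap F(U)\subseteq\{0\}$ you are reproducing the paper's proof verbatim.

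Where you diverge is that you notice a genuine issue the paper sweeps under the rug: Lemma~\ref{l0} gives $\Sing H=F^{-1}(\Sing G)$, and $\Disc H=\{0\}$ then only yields $\Sing G\cap F(U)\subseteq V_G$, not $\Sing G\subseteq V_G$. Your proposed ``closing argument'' via the Curve Selection Lemma and analyticity is not an argument but a hope, and it cannot be completed under the stated hypotheses. Take $F$ as in Example~\ref{fnotame}, namely $F(x,y,z,w)=(x^{2}+y^{2},\,z(x^{2}+y^{2}),\,w(x^{2}+y^{2}))$, whose image is $\{0\}\cup\{u>0\}$, and set $G(u,v,t)=\bigl((u+v^{2})^{2}+t^{2},\,v\bigr)$. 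Then $\Sing G=\{u+v^{2}=0,\ t=0\}$ and $V_G=\{0\}$, so $\Sing G\cap V_G=\{0\}$ (hence $G$ is $K$-finitely determined, even after complexification), yet $\Sing G\neq\{0\}$: the singular parabola lies entirely in $\{u\le 0\}$ and meets $F(U)$ only at the origin. One checks $\Disc F=\{0\}$, $\Sing H=V_H=\{x=y=0\}$, and $\Disc H=\{0\}$, so every hypothesis of the proposition holds while its first conclusion fails. Thus your suspicion is correct and the gap is real: the implication in the triangular diagram, and hence the proposition itself, needs an additional hypothesis ensuring $F$ is locally surjective---for instance $F$ tame, which via the tube fibration of Corollary~\ref{ttf2} forces $F$ to cover a punctured neighbourhood of $0$.
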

	
	\begin{proof}
		The proof follows easily from the triangular diagram in Section \ref{sec:composite} and the fact that the condition for $G$ to be $\mathcal{C}^{l}-K-$ finitely determined is equivalent to $\Sing G \cap V_{G}=\{0\}$ in a small neighborhood of the origin. These two facts together imply that $\Sing G =\{0\}$, and hence $G$ is tame. If further $F$ is tame, Condition \eqref{tame} is trivially satisfied and so $H$ is tame as well.  
	\end{proof}
	
	This result also motivates us to state this more general (maybe difficult) question:
	
	\begin{question}
		Given $H$ with $\Disc H=\{0\}$ and tame, is it possible to split $H$ as a composition $H=G\circ F$ of map germs $F$ and $G$, with $F$ tame and $\Disc F=\{0\}$? 
	\end{question}
	
	
	

\end{document}